\documentclass[11pt]{amsart}

\usepackage[letterpaper,top=1.1in,bottom=1.1in,left=0.9in,right=0.9in]{geometry}
\usepackage{amsmath,amssymb,mathtools}
\usepackage{microtype}
\usepackage{xcolor}
\definecolor{linkviolet}{RGB}{112,60,160}
\definecolor{citeblue}{RGB}{35,82,145}
\usepackage{hyperref}
\hypersetup{
  colorlinks=true,
  linkcolor=linkviolet,
  citecolor=citeblue,
  urlcolor=citeblue,
  pdftitle={The Parafree Conjecture for Associative Algebras},
  pdfauthor={Vasily Ionin and Roman Mikhailov},
  pdfsubject={Parafree augmented associative algebras and second homology}
}

\numberwithin{equation}{section}
\newtheorem{theorem}{Theorem}

\newtheorem{proposition}[equation]{Proposition}
\newtheorem{lemma}[equation]{Lemma}
\newtheorem{corollary}[equation]{Corollary}
\theoremstyle{definition}
\newtheorem{remark}[equation]{Remark}

\title{The Parafree Conjecture for Associative Algebras}
\author{Vasily Ionin}
\address{Saint Petersburg Department of Steklov Mathematical Institute\newline
27 Fontanka, St.~Petersburg, 191023, Russia}
\email{ionin.code@gmail.com}
\author{Roman Mikhailov}
\address{Saint Petersburg State University\newline
7--9 Universitetskaya Embankment, St.~Petersburg, 199034, Russia}
\subjclass[2020]{Primary 16E40, 16S15; Secondary 16W60}
\date{}

\begin{document}

\begin{abstract}
\leftskip=1.6pc
\rightskip=1.6pc
\hyphenpenalty=10000
For every base field, we construct a finitely generated parafree augmented
associative algebra with countably infinite-dimensional second homology.
This disproves the analogue of the Parafree Conjecture for
associative algebras and answers a question of Ivanov and Lopatkin. Our example
is the monoid algebra of a finitely generated but not finitely presented
submonoid of a free monoid.
\end{abstract}

\maketitle

\section{Introduction}

Let $\gamma_1(G)=G$ and $\gamma_{n+1}(G)=[\gamma_n(G),G]$, $n\geq1$, denote
the lower central series of a group $G$. A group $G$ is called
\emph{parafree} if it is residually nilpotent,
\[
  \bigcap_{n\geq1}\gamma_n(G)=1,
\]
and there exist a free group $F$ and a homomorphism $F\to G$ inducing
isomorphisms
\[
  F/\gamma_n(F)\xrightarrow{\cong}G/\gamma_n(G)
\]
for every $n\geq1$. Thus a parafree group has the same nilpotent quotients as
a free group, although it need not itself be free.

Parafree groups were introduced by G.~Baumslag in the 1960s in connection
with the problem of characterizing groups of cohomological dimension one
\cite{Baumslag1967a,Baumslag1967b,Baumslag1968,Baumslag1969}. Baumslag
hoped that nonfree parafree groups might provide examples of nonfree groups
of cohomological dimension one. The Stallings--Swan theorem subsequently
established that every group of integral cohomological dimension one is free
\cite{Stallings1968,Swan1969}. Nevertheless, parafree groups proved to be of
independent interest: despite the existence of nonfree examples, they share a
remarkable range of algebraic and homological properties with free groups.
They also arise in questions about lower central series, fundamental groups of
$3$-manifolds, four-dimensional surgery, and link concordance
\cite{CochranOrr1998}.

Baumslag formulated a number of problems concerning the extent to which
properties of free groups persist for parafree groups; see, for example,
\cite{Baumslag2005}. These problems include the following. Is every finitely
generated parafree group finitely presented? Does every finitely generated
parafree group have cohomological dimension at most $2$? Are finitely
generated parafree groups hyperbolic? Finally, does
\[
  H_2(G;\mathbb Z)=0
\]
hold for every finitely generated parafree group $G$? The last assertion is
commonly known as the \emph{Parafree Conjecture}; its stronger form
additionally asserts that $\operatorname{cd}(G)\leq2$. To the best of our
knowledge, these questions remain open in the finitely generated case. The
finite-generation assumption is essential in the homological conjecture:
non-finitely generated parafree groups
with nontrivial second homology are known to exist
\cite{Bousfield1977,IvanovMikhailovZaikovskii2020}.

The notion of parafreeness extends naturally to other algebraic categories,
including Lie algebras and augmented associative algebras. This leads to
analogous questions: which properties are shared by free and parafree objects,
and to what extent do finiteness properties and homological dimension depend
only on their nilpotent quotients? Parafree Lie algebras were studied in
\cite{IvanovMikhailovZaikovskii2020,BS80}. For augmented associative algebras, a
systematic theory was developed by Ivanov and Lopatkin \cite{IL21}. In
particular, they constructed a finitely generated parafree augmented
associative algebra of infinite cohomological dimension, thereby disproving
the associative-algebra analogue of the strong Parafree Conjecture. They also
asked whether
\[
  H_2(B)=0
\]
for every finitely generated parafree augmented algebra $B$.

We recall the relevant definitions. Fix a field $\Bbbk$. All algebras are
associative unital $\Bbbk$-algebras, and all ideals
are two-sided. An \emph{augmentation} of an algebra $B$ is a unital
$\Bbbk$-algebra homomorphism $\varepsilon_B\colon B\to\Bbbk$; its kernel
$I(B)=\ker(\varepsilon_B)$ is the \emph{augmentation ideal}.
An augmented algebra $B$ is \emph{residually nilpotent} if
\[
  I(B)^\omega:=\bigcap_{n\geq1}I(B)^n=0.
\]
A morphism $B\to C$ of augmented algebras is a \emph{para-equivalence} if it
induces an isomorphism
\[
  B/I(B)^n\xrightarrow{\cong}C/I(C)^n
\]
for every $n\geq1$. Following \cite[Sections~4.2 and~5]{IL21}, an augmented
algebra $B$ is \emph{parafree of rank $m$} if it is residually nilpotent and
admits a para-equivalence
\[
  \Bbbk\langle X\rangle\longrightarrow B
\]
for some set $X$ of $m$ elements.
The para-equivalence induces
\[
  I(B)/I(B)^2\cong H_1(B)\cong\Bbbk^{\oplus m},
\]
so $m$ is determined by $B$. We write
\[
  H_i(B)=\operatorname{Tor}^B_i(\Bbbk,\Bbbk),
\]
where both copies of $\Bbbk$ are regarded as $B$-modules via the augmentation.

\subsection{Overview of results}

The main result of the present paper gives a negative answer to the question
of Ivanov and Lopatkin.

Augment $\Bbbk\langle a,b,c,d,x,y\rangle$ by sending each generator to $0$. For $\ell\geq0$, set
\begin{equation}\label{eq:def-e}
 e_\ell=(1+a)(1+b)^\ell(1+c)-(1+d)(1+x)^\ell(1+y).
\end{equation}
Define
\begin{equation}\label{eq:def-A}
 A=\Bbbk\langle a,b,c,d,x,y\rangle/(e_\ell\mid\ell\geq0).
\end{equation}
Our main result is the following.

\begin{theorem}\label{thm:main}
The augmented algebra $A$ is parafree of rank four, with
$H_2(A)\cong\Bbbk^{\oplus\aleph_0}$. The algebra $A$ is finitely generated but not finitely presented.
\end{theorem}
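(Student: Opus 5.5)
The plan is to pass to group-like variables. Put $U=1+a$, $B=1+b$, $C=1+c$, $D=1+d$, $X=1+x$, $Y=1+y$. Then $\Bbbk\langle a,\dots,y\rangle=\Bbbk[W]$ is the monoid algebra of the free monoid $W$ on $U,B,C,D,X,Y$, with augmentation sending each letter to $1$. Hence $A=\Bbbk[M]$, where $M$ is the monoid presented by the relations $UB^\ell C=DX^\ell Y$ for $\ell\ge0$.

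\textbf{Step 1 (normal forms and embedding).} Orient the relations as rewriting rules $UB^\ell C\to DX^\ell Y$. Every left-hand side begins with $U$ and ends with $C$, and contains no other occurrence of $U$ or $C$. So no proper suffix of one left side is a prefix of another, and no left side is a proper factor of another. Thus there are no overlaps, and the system is terminating (the number of $U$'s decreases) and confluent. In particular $\Bbbk[M]$ has as a basis the words avoiding every $UB^\ell C$.

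Next I exhibit $M$ as a submonoid of a free monoid on $p,q,r,t$ via
\[
U\mapsto pq,\quad D\mapsto p,\quad B\mapsto rq,\quad X\mapsto qr,\quad C\mapsto t,\quad Y\mapsto qt.
\]
This respects the relations, since $pq(rq)^\ell t=p(qr)^\ell qt$. I would check that it is injective on normal forms by a direct decoding argument: read a $p$-block and decide whether it is followed by $q$, using that $t$ terminates the ambiguity. This gives the description in the abstract. Finite generation of $A$ is clear.

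\textbf{Step 2 (parafreeness).} For residual nilpotence, note first that $\Bbbk[\text{free monoid}]$ is a free algebra on the elements $w-1$, which is residually nilpotent in every characteristic. Since $I(A)^n\subseteq I(\Bbbk\langle p,q,r,t\rangle)^n$, the embedding of Step 1 gives $I(A)^\omega=0$.

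For the para-equivalence, consider $\phi\colon\Bbbk\langle a,b,c,y\rangle\to A$. I define a map back, $\psi\colon A\to\Bbbk\langle\!\langle a,b,c,y\rangle\!\rangle$, by
\[
D\mapsto UCY^{-1},\qquad X\mapsto YC^{-1}BCY^{-1},
\]
with $U,B,C,Y$ sent to themselves. This mimics the embedding with $Q=YC^{-1}$. Then $DX^\ell Y=UB^\ell C$ holds identically, so $\psi$ is well defined, and $\psi\phi$ is the completion map. Hence $\phi$ is injective modulo $I^n$.

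$\phi$ is also surjective modulo $I(A)^n$. The relations $e_0$ and $e_1$ express $D$, and then $X$, as truncated power series in $U,B,C,Y$ modulo $I(A)^n$, because $C$ and $Y$ are invertible modulo $I^n$. Combining these, $\phi$ is a para-equivalence, and $A$ has rank $4$. This matches $H_1$: the linear parts $a+\ell b+c-d-\ell x-y$ of the $e_\ell$ span exactly the two relations $b=x$ and $a+c=d+y$.

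\textbf{Step 3 ($H_2$ and non-finite presentability).} Since the rewriting system has no overlaps, the Anick resolution of $\Bbbk$ over $A$ stops in degree $2$. Its terms are
\[
C_0=\Bbbk,\qquad C_1=\Bbbk^{6},\qquad C_2=\Bbbk^{(\aleph_0)},
\]
with $C_2$ indexed by the obstructions $UB^\ell C$, and $C_3=0$. After applying $\Bbbk\otimes_A-$, the map $C_1\to C_0$ is $0$. The map $C_2\to C_1$ sends the $\ell$-th obstruction to the linear part of $e_\ell$, which has rank $2$. Hence
\[
H_2(A)=\ker\bigl(C_2\to C_1\bigr)\cong\Bbbk^{\oplus\aleph_0}.
\]
For the last claim, I use the Hopf-type formula $H_2(A)\cong(R\cap I_F^2)/(I_FR+RI_F)$ for any presentation $A=F/R$ with $F$ free. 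If $A$ were finitely presented, this would be finite-dimensional, so $A$ is not finitely presented.

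\textbf{Main obstacle.} I expect the hardest part to be the careful verification of the Anick or minimal-resolution computation: that the absence of overlaps really yields a length-$2$ resolution with the stated differential in the non-graded (augmentation-filtered) setting. If that becomes delicate, I would instead compute $H_2(M;\Bbbk)$ through Squier's complex or the Lyndon-type resolution for one-relator-style monoid presentations with no critical pairs. Injectivity of the embedding in Step 1 is the other point needing care, but it is only used for residual nilpotence.
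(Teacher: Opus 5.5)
Your proof is correct, and most of it follows the paper's skeleton. The genuine difference is how you prove the para-equivalence.

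\textbf{What matches.} Residual nilpotence comes from an embedding into a free monoid algebra. The tips $UB^\ell C$ have no overlaps, so the resolution has length two. The linearized differential has rank two. The Hopf formula gives non-finite presentability. Your embedding is exactly the paper's monoid $L$: under $p,q,r,t\mapsto X_1,X_2,X_3,X_4$ your six images are the paper's generators, with the roles of $(U,B,C)$ and $(D,X,Y)$ interchanged. Your decoding argument is the prefix argument of Lemma~\ref{lem:monoid-kernel}, since only $p$ and $pq$ are prefix-comparable.

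\textbf{Para-equivalence.} The paper goes through the auxiliary algebra $A_0$. It gets $\Bbbk\langle a,b,c,d\rangle\to A_0$ $2$-acyclic from the Ivanov--Lopatkin criterion (Lemma~\ref{lem:standard-acyclic}), hence a para-equivalence. It then proves $\ker(A_0\to A)=I(A_0)^\omega$: every $f_\ell$ vanishes in $\widehat{A_0}$ by Lemma~\ref{lem:elimination}, and $A$ is residually nilpotent. You instead build $\psi\colon A\to\Bbbk\langle\langle a,b,c,y\rangle\rangle$ directly, with $\psi\phi$ the completion map, and solve $e_0,e_1$ for $d,x$ modulo $I(A)^n$. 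Solving $e_1$ for $X$ also needs $U$ and $D$ invertible modulo $I(A)^n$, which is equally clear. This is the elimination of Lemma~\ref{lem:elimination} applied directly to $A$, keeping $y$ instead of $d$. It is more elementary and avoids the $2$-acyclicity machinery entirely. What the detour through $A_0$ buys is extra structure that your route does not produce: the equality $I(A_0)^\omega=\ker(A_0\to A)$, the description of $H_2(A)$ inside $I(A_0)^\omega$ in Theorem~\ref{thm:structure}, and the transfinite computation of Section~\ref{sec:transfinite}.

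\textbf{$H_2$.} The paper sidesteps the obstacle you flag by working in the shifted variables $a,\dots,y$. There Anick's theorem (Lemma~\ref{lem:anick}) applies to the reduced Gr\"obner--Shirshov basis $(f_\ell)$ with tips $ab^\ell c$, and the differential is $[f_\ell]\mapsto\operatorname{lin}(f_\ell)$. In your group-like variables the right tool is indeed the Squier--Kobayashi resolution for a complete monoid rewriting system. Its differential is the abelianized relation, i.e.\ $\operatorname{lin}(e_\ell)$. The two maps differ, but both have the same two-dimensional image spanned by $a+c-d-y$ and $b-x$. So either one gives $H_2(A)\cong\Bbbk^{\oplus\aleph_0}$.
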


Thus the associative-algebra analogue of the Parafree Conjecture fails even
under the finite-generation hypothesis.

\begin{remark}
The algebra $A$ is the monoid algebra $\Bbbk[L]$ of a finitely generated but not
finitely presented submonoid $L$ of a free monoid; see
Corollary~\ref{cor:monoid}. Such monoids were first constructed by
Spehner~\cite[Theorem~3.2]{Spehner1977}.
\end{remark}

To prove Theorem~\ref{thm:main}, we introduce the auxiliary augmented algebra
\begin{equation}\label{eq:def-A0}
 \begin{aligned}
 A_0&=\Bbbk\langle a,b,c,d,x,y\rangle/(e_0,e_1)\\
 &=\Bbbk\left\langle
 a,b,c,d,x,y
 \;\middle|\;
 \begin{aligned}
 x&=b+ab+bc-dx-xy+abc-dxy,\\
 y&=a+c-d+ac-dy
 \end{aligned}
 \right\rangle.
 \end{aligned}
\end{equation}
Theorem~\ref{thm:main} follows from the next result.

\begin{theorem}\label{thm:structure}
The canonical map $\Bbbk\langle a,b,c,d\rangle\to A_0$ is a para-equivalence.
Moreover, one has
\[
 \begin{gathered}
 I(A_0)^\omega=\ker(A_0\twoheadrightarrow A),\\
 H_2(A)
 \cong\cfrac{I(A_0)^\omega}
 {I(A_0)\cdot I(A_0)^\omega+I(A_0)^\omega\cdot I(A_0)}
 \cong\bigoplus_{\ell\geq2}\Bbbk.
 \end{gathered}
\]
A basis of $H_2(A)$ is represented by
\[
 (1+a)b^\ell(1+c)-(1+d)x^\ell(1+y)\in I(A_0)^\omega,
 \qquad \ell\geq2.
\]
\end{theorem}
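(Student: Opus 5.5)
The plan is to pass to group-like variables. Set $\alpha=1+a$, $\beta=1+b$, $\gamma=1+c$, $\delta=1+d$, $\xi=1+x$, $\eta=1+y$. The substitution $a\mapsto\alpha-1$ and so on is an automorphism of the free algebra, so
\[
A\cong\Bbbk\langle\alpha,\beta,\gamma,\delta,\xi,\eta\rangle/(\alpha\beta^\ell\gamma-\delta\xi^\ell\eta\mid \ell\ge0)=\Bbbk[M],
\]
where $M$ is the monoid with these defining relations and the augmentation sends every monoid element to $1$. Likewise $A_0=\Bbbk[M_0]$ with only the relations $\ell=0,1$.

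\emph{Step 1: para-equivalence.} Modulo $I^2$ the two relations of $A_0$ read $x\equiv b$ and $y\equiv a+c-d$. I would argue in the $I$-adic completion $\widehat{\Bbbk\langle a,b,c,d\rangle}$. There the equations $\eta=\delta^{-1}\alpha\gamma$ and $\xi=\delta^{-1}\alpha\beta\gamma\eta^{-1}$ make sense, because $\alpha,\gamma,\delta$ are units. They define $x,y$ uniquely, and this gives mutually inverse maps between the completions of $\Bbbk\langle a,b,c,d\rangle$ and of $A_0$, compatible with the filtrations. Hence $\Bbbk\langle a,b,c,d\rangle/I^n\cong A_0/I(A_0)^n$ for all $n$.

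\emph{Step 2: the relations lie in $I(A_0)^\omega$.} In the completion of $A_0$, the relation $\alpha\gamma=\delta\eta$ gives $\delta^{-1}\alpha=\eta\gamma^{-1}$. Substituting this into $\alpha\beta\gamma=\delta\xi\eta$ yields $\xi=\eta\gamma^{-1}\beta\gamma\eta^{-1}$. Therefore
\[
\delta\xi^\ell\eta=\delta\eta\gamma^{-1}\beta^\ell\gamma=\alpha\beta^\ell\gamma,
\]
so every $e_\ell$ maps to $0$ in the completion. This gives $(e_\ell)\subseteq I(A_0)^\omega$. It follows that $A$ is para-equivalent to $A_0$, and hence to $\Bbbk\langle a,b,c,d\rangle$.

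\emph{Step 3: residual nilpotence of $A$.} This gives the reverse inclusion $I(A_0)^\omega\subseteq(e_\ell)$ and is the main obstacle. I would embed $M$ into a free monoid on $p,q,v,w$ by
\[
\alpha=p,\quad \delta=pv,\quad \beta=vw,\quad \xi=wv,\quad \gamma=vq,\quad \eta=q,
\]
so that $\alpha\beta^\ell\gamma=p(vw)^\ell vq=pv(wv)^\ell q=\delta\xi^\ell\eta$. The combinatorial core is to show that the submonoid $L$ generated by these six words has exactly these relations. I would do this by a factorization analysis: any two factorizations of a word of $L$ into generators must differ only by blocks $p(vw)^\ell vq\leftrightarrow pv(wv)^\ell q$, which can be argued by tracking where the letters $p$ and $q$ occur. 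Given $M\cong L\subseteq W$, we get $I(\Bbbk[L])^n\subseteq I(\Bbbk[W])^n$. The Magnus embedding $w_i\mapsto 1+t_i$ shows $I(\Bbbk[W])^\omega=0$, so $A$ is residually nilpotent. Consequently $A$ is parafree of rank four and $I(A_0)^\omega=\ker(A_0\to A)$.

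\emph{Step 4: homology.} I would use the Hopf-type formula for $A=F/R$ with $F$ free on six generators:
\[
H_2(A)\cong (R\cap I_F^2)/(I_FR+RI_F).
\]
The relations $\alpha\beta^\ell\gamma\to\delta\xi^\ell\eta$ form a rewriting system with no overlaps, since a leading word begins with $\alpha$ and ends with $\gamma$ and contains no other $\alpha$ or $\gamma$. So they are a Gröbner basis and are minimal, and $R/(I_FR+RI_F)$ has basis the classes of $e_\ell$ for $\ell\ge0$. Now $e_0$ and $e_1$ have linearly independent images in $I_F/I_F^2$. By binomial expansion, the elements $(1+a)b^\ell(1+c)-(1+d)x^\ell(1+y)$ for $\ell\ge2$ are triangular integer combinations of $e_0,\dots,e_\ell$ lying in $I_F^2$. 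Hence they form a basis of $R\cap I_F^2$ modulo $I_FR+RI_F$. Passing to $A_0=F/(e_0,e_1)$ identifies this quotient with $I(A_0)^\omega/(I(A_0)I(A_0)^\omega+I(A_0)^\omega I(A_0))$. This gives $H_2(A)\cong\bigoplus_{\ell\ge2}\Bbbk$, and non-finite presentation follows because $H_2(A)$ is infinite-dimensional.
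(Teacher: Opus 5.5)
Your proposal is correct. Its skeleton matches the paper's. You get $e_\ell\in I(A_0)^\omega$ by solving for $\xi,\eta$ with unipotent units in $\widehat{A_0}$; this is Lemma~\ref{lem:elimination}, with $h^{-1}=\delta^{-1}\alpha=\eta\gamma^{-1}$. You get the reverse inclusion from the same monoid $L$ embedded in a free algebra (your $p,v,w,q$ are $X_1,X_2,X_3,X_4$).

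Two steps take a genuinely different route.

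\emph{Para-equivalence.} For $\Bbbk\langle a,b,c,d\rangle\to A_0$, the paper uses the Ivanov--Lopatkin machinery: the right-hand sides in \eqref{eq:def-A0} are acyclic polynomials, so the map is $2$-acyclic, hence a para-equivalence, and this also gives $H_2(A_0)=0$. You instead write down mutually inverse maps modulo $I^n$, which is self-contained. It is cleanest to define $A_0\to F/I(F)^n$ outright, since every $1+u$ with $u\in I(F)/I(F)^n$ is a unit there.

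\emph{Homology.} The paper computes $H_2(A)$ from Anick's resolution of the Gr\"obner--Shirshov basis $(f_\ell)$. It then identifies $H_2(A)$ with $I(A_0)^\omega/(I(A_0)I(A_0)^\omega+I(A_0)^\omega I(A_0))$ via the exact sequence of \cite[Lemma~4.1]{IL21}, which needs $H_2(A_0)=0$. You take the basis $[e_0],[e_1],[f_2],[f_3],\dots$ of $R/(I_FR+RI_F)$ and discard $[e_0],[e_1]$ on passing to $A_0$. This yields both isomorphisms and the basis at once, and never uses $H_2(A_0)$. The paper's route plugs into general theory; yours is more elementary.

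\emph{Kernel congruence.} Your ``track $p$ and $q$'' argument also differs from the paper's first-factor induction, and it does go through. Each maximal $\{v,w\}$-run is tiled by $vw,wv$ in a forced way, except for an optional first or last letter $v$ absorbed into an adjacent $pv$ or $vq$. Two tilings exist exactly for runs $v(wv)^k$ with $p$ on the left and $q$ on the right. This gives independent local swaps $p(vw)^kvq\leftrightarrow pv(wv)^kq$.

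The one step that needs a real argument is your claim that the classes of the $e_\ell$ form a basis of $R/(I_FR+RI_F)$, since it carries all of the infinite-dimensionality. It does not follow from the family being a reduced (``minimal'') Gr\"obner basis; in general such bases need not be independent modulo $I_FR+RI_F$. It follows from the absence of overlaps, and here a short direct proof is available.

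Spanning is $[ue_mv]=[e_m]$ for words $u,v$. For independence, let $\Phi_\ell\colon F\to\Bbbk$ send each word in $\alpha,\dots,\eta$ to its number of occurrences of $\alpha\beta^\ell\gamma$. In such a word $\alpha$ occurs only as the first letter and $\gamma$ only as the last, and $\delta\xi^m\eta$ contains none of $\alpha,\beta,\gamma$. So no occurrence can straddle the inserted block, and $\Phi_\ell(ue_mv)$ equals $1$ if $m=\ell$ and $0$ otherwise, for all words $u,v$. Hence $\Phi_\ell$ vanishes on $I_FR+RI_F$, which is spanned by the elements $(s-1)ue_mv$ and $ue_mv(s-1)$ with $s,u,v$ words, and it separates the classes $[e_m]$. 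Alternatively, cite Anick's resolution with no $3$-chains, as in Lemma~\ref{lem:anick}. With this filled in, your Step~4 is complete.
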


For an ideal $I$ of an algebra $B$, define its \emph{transfinite powers}
$I^\alpha$, indexed by ordinals, recursively by
\[
 I^0=B,
 \qquad
 I^{\alpha+1}=I^\alpha\cdot I+I\cdot I^\alpha,
 \qquad
 I^\alpha=\bigcap_{\beta<\alpha}I^\beta
 \quad\text{for limit }\alpha.
\]
The \emph{transfinite length} of $\{I^\alpha\}$ is the least ordinal $\alpha$
such that $I^\alpha=I^{\alpha+1}$. In particular, Theorem~\ref{thm:structure}
implies that
\[
 I(A_0)^\omega\neq I(A_0)^{\omega+1}.
\]
\begin{theorem}\label{thm:transfinite-length}
The series $\{I(A_0)^\alpha\}$ has transfinite length $\omega^2$.
\end{theorem}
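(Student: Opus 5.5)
The plan is to describe the whole series $\{I(A_0)^\alpha\}$ explicitly in terms of $J:=I(A_0)^\omega=\ker(A_0\twoheadrightarrow A)$ and $I:=I(A_0)$. Write $J^{[k]}$ for the ideal $J^k$, the span of all products $u_0 g_1 u_1 g_2\cdots g_k u_k$ with $g_i\in J$ and $u_i\in A_0$. The target formula is
\[
 I^{\omega\cdot k+n}=\sum_{i+j=n} I^{i}\,J^{[k]}\,I^{j}\qquad(k\geq0,\ n\geq0).
\]
From it the theorem follows once we show three things. First, each inclusion $I^{\omega k+n}\supsetneq I^{\omega k+n+1}$ is strict. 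Second, $I^{\omega(k+1)}=J^{[k+1]}$ (the limit step). Third, $I^{\omega^2}=\bigcap_k J^{[k]}=0$.

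Successor steps are formal from the recursion $I^{\alpha+1}=I^\alpha I+II^\alpha$, since the right-hand side reproduces the formula with $n+1$. The base case $k=1$, $n=0$ is Theorem~\ref{thm:structure}.

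For the computations I will pass to group-like variables $u=1+a$, $v=1+b$, $w=1+c$, $p=1+d$, $q=1+x$, $r=1+y$. Then $A_0=\Bbbk[M]$ for the monoid $M=\langle u,v,w,p,q,r\mid uw=pr,\ uvw=pqr\rangle$, and $A=\Bbbk[L]$ by Corollary~\ref{cor:monoid}. I would fix a confluent rewriting system for $M$, orienting $pr\to uw$ and $pqr\to uvw$, and check overlaps. This gives normal forms, hence a $\Bbbk$-basis of $A_0$. The kernel $J$ is spanned by the differences $uv^\ell w-pq^\ell r$ with $\ell\geq2$, each multiplied on both sides by monoid elements. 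I then define a ``$J$-degree'' on $A_0$ as follows. Expand an element in the basis adapted to the monoid-to-$L$ map: $\Bbbk[L]$ plus the span of elements $m(uv^\ell w-pq^\ell r)m'$. The $J$-degree counts how many independent such differences a basis element carries, i.e.\ the number of positions where the normal form contains a factor $pq^\ell r$ that could be rewritten inside $L$. I expect this to give a filtration $F_k\supseteq J^{[k]}$ with $\bigcap_k F_k=0$, since every fixed normal-form word has bounded length and hence bounded $J$-degree. That settles $I^{\omega^2}=0$.

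For strictness I would generalise the $H_2$ computation of Theorem~\ref{thm:structure}. The quotient
\[
 J^{[k]}\big/\bigl(IJ^{[k]}+J^{[k]}I\bigr)
\]
should be isomorphic to the $k$-fold tensor power over $\Bbbk$ of $J/(IJ+JI)\cong\bigoplus_{\ell\geq2}\Bbbk$, interleaved with $\Bbbk[L]$-data. In particular, a product like $f_2 f_2\cdots f_2$ with $f_\ell=(1+a)b^\ell(1+c)-(1+d)x^\ell(1+y)$ should survive. To detect it I would use the leading-term map of the $J$-degree filtration to $\operatorname{gr}_k$, composed with the augmentation of the $L$-factors. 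Applied to $I^{i}J^{[k]}I^{j}$, this should show the layers $n\mapsto n+1$ are strict as well.

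The main obstacle is the limit step
\[
 \bigcap_{n}\sum_{i+j=n}I^iJ^{[k]}I^j=J^{[k+1]}.
\]
The inclusion $\supseteq$ is easy, because $J\subseteq I^n$ for all $n$. For $\subseteq$, I would try to prove that $A_0/J^{[k+1]}$ modulo $J^{[k]}$-parts is residually nilpotent in the appropriate relative sense. Concretely, $J^{[k]}/J^{[k+1]}$ is a free $\Bbbk[L]$-bimodule-like object generated by the ``$k$-fold difference'' words. The induced filtration by $I$ on it is then the $I(A)$-adic filtration of a free $A$-bimodule, and that filtration is Hausdorff because $A=\Bbbk[L]$ is residually nilpotent (Theorem~\ref{thm:main}). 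Making the freeness of $J^{[k]}/J^{[k+1]}$ precise via the normal forms is where I expect most of the work to lie.
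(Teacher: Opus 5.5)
Your architecture matches the paper's. The successor step gives $I^{\omega\cdot k+n}=\sum_{i+j=n}I^iJ^{[k]}I^j$, the limit step goes through the $A$-bimodule $J^{[k]}/J^{[k+1]}$, and you conclude $I^{\omega^2}=\bigcap_kJ^{[k]}=0$. Your claim that $\sum_{i+j=n}I(A)^i\otimes W\otimes I(A)^j$ is separated on a free bimodule $A\otimes W\otimes A$ merely because $I(A)^\omega=0$ is also correct. To see it, reduce to $A'\otimes W\otimes A''$ with $A',A''$ finite-dimensional, and choose $N$ with $A'\cap I(A)^N=A''\cap I(A)^N=0$. This replaces the paper's embedding $A\subseteq\Bbbk\langle x_1,\dots,x_4\rangle$.

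\textbf{Gap 1 (main): the structure of $J^{[k]}$ is assumed, not proved, and your $J$-degree does not deliver it.} You need $J^{[k]}\neq0$, $\bigcap_kJ^{[k]}=0$, and $J^{[k]}/J^{[k+1]}\cong A\otimes V\otimes A\otimes\cdots\otimes V\otimes A$ with $V=\bigoplus_{\ell\geq2}\Bbbk z_\ell$. Suppose the $J$-degree counts factors $pq^\ell r$ ($\ell\geq2$) in normal forms for $pr\to uw$, $pqr\to uvw$, as you describe. Then $e_2=uv^2w-pq^2r\in J$ involves the normal word $uv^2w$ of degree $0$, so the resulting filtration does not even contain $J$. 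A rewriting system for $M$ alone gives a basis of $A_0$ but says nothing about powers of $J$. What is needed is a basis of genuine products $w_0z_{\ell_1}w_1\cdots z_{\ell_t}w_t$ of differences, with $L$-normal $w_i$, plus a reason why multiplication never lowers $t$. The paper gets both from a Tietze move. Adjoin generators $z_\ell\mapsto e_\ell$ for $\ell\geq2$, and rewrite $uv^\ell w\to pq^\ell r+z_\ell$ for $\ell\geq2$ and $uv^\ell w\to pq^\ell r$ for $\ell=0,1$. The left-hand sides have no inclusions or overlaps, and each reduction lowers the number of letters $u$. No reduction lowers the number of letters $z$. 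Hence $J^{[k]}$ is exactly the span of normal words with at least $k$ letters $z$, which yields all three facts at once.

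\textbf{Gap 2: your strictness detector only works for $n=0$.} The image of $\sum_{i+j=n}I^iJ^{[k]}I^j$ in $J^{[k]}/J^{[k+1]}$ lies in $\sum_{i+j=n}I(A)^i(\cdot)I(A)^j$. Augmenting the outer $A$-factors kills this as soon as $n\geq1$. So the detector separates stage $\omega\cdot k$ from $\omega\cdot k+1$ and nothing further. No layer-by-layer computation is needed, though. If $I^\alpha=I^{\alpha+1}$, the series is constant from $\alpha$ on, so $I^{\omega^2}=I^\alpha$. But for $\alpha=\omega\cdot k+n<\omega^2$ one has $I^\alpha\supseteq J^{[k]}I^n\supseteq J^{[k+1]}\neq0$, using only the easy inclusion $J\subseteq I^n$. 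Since $I^{\omega^2}=0$, no such $\alpha$ exists. This is how the paper concludes.
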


\subsection{Organization}

Section~\ref{sec:preliminaries} recalls completions, monoid algebras,
$2$-acyclicity, \mbox{Gr\"obner--Shirshov} bases, and Anick's
resolution. In Section~\ref{sec:monoid}, we identify $A$ with the monoid algebra
$\Bbbk[L]$ of a finitely generated but not finitely presented submonoid $L$
of a free monoid and deduce that $A$ is residually nilpotent. In
Section~\ref{sec:completion}, we prove that $A$ is parafree of rank four. In
Section~\ref{sec:homology}, we
compute $H_2(A)$, prove Theorem~\ref{thm:structure}, and deduce
Theorem~\ref{thm:main}. In
Section~\ref{sec:transfinite}, we compute the transfinite augmentation series
of $A_0$ and prove Theorem~\ref{thm:transfinite-length}.

\subsection{Acknowledgements}

The authors are deeply grateful to Laurent Bartholdi and Artem Semidetnov for
stimulating discussions and their infectious enthusiasm for this circle of
problems.

\subsection{AI use statement}

This text was written with the assistance of ChatGPT 5.6 Sol. Working with AI
was an iterative process, making it difficult to isolate its specific
contribution.

\section{Preliminaries}\label{sec:preliminaries}

\subsection{Completion}

Every augmented algebra $B$ is filtered by the powers of its augmentation
ideal:
\[
 B=I(B)^0\supseteq I(B)\supseteq I(B)^2\supseteq\cdots.
\]
Its completion is
\[
 \widehat B=\varprojlim_n B/I(B)^n.
\]
Every morphism $f\colon B\to C$ of augmented algebras satisfies
\[
 f\bigl(I(B)^n\bigr)\subseteq I(C)^n,
 \qquad n\geq0,
\]
and therefore induces a homomorphism
\[
 \widehat f\colon\widehat B\longrightarrow\widehat C.
\]

\subsection{Monoid algebras}

Let $M$ be a monoid. Its \emph{monoid algebra} is the $\Bbbk$-vector space
\[
 \Bbbk[M]=\bigoplus_{w\in M}\Bbbk w
\]
with multiplication
\[
 \left(\sum_{w\in M}\alpha_w w\right)
 \left(\sum_{w\in M}\beta_w w\right)
 =\sum_{w\in M}\left(\sum_{uv=w}\alpha_u\beta_v\right)w.
\]
It has the canonical augmentation
$\sum_{w\in M}\alpha_w w\mapsto\sum_{w\in M}\alpha_w$.

\begin{remark}\label{rem:free-monoid-algebra}
If $M=X^*$ is the free monoid on a set $X$, then
\[
 \Bbbk\langle X\rangle\xrightarrow{\sim}\Bbbk[M],
 \qquad x\longmapsto x-1,
\]
is an isomorphism of augmented algebras, where
$\Bbbk\langle X\rangle$ is augmented by $x\mapsto0$.
\end{remark}

\begin{lemma}\label{lem:free-monoid-subalgebra}
Let $X$ be a set, and augment $\Bbbk\langle X\rangle$ by $x\mapsto0$ for
$x\in X$. Every augmented subalgebra of $\Bbbk\langle X\rangle$ is
residually nilpotent. In particular, if $M\subseteq X^*$ is a submonoid,
then $\Bbbk[M]$, augmented by $w\mapsto1$, is residually nilpotent.
\end{lemma}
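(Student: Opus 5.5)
Let $B\subseteq\Bbbk\langle X\rangle$ be an augmented subalgebra. The phrase means the augmentation of $B$ is the restriction of the augmentation $\varepsilon$ of $\Bbbk\langle X\rangle$, so that $I(B)=B\cap I(\Bbbk\langle X\rangle)$. The plan is to compare $I(B)$ with the augmentation ideal $J:=I(\Bbbk\langle X\rangle)$ of the free algebra, which consists exactly of the noncommutative polynomials with zero constant term.

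\emph{Step 1.} Since $I(B)\subseteq J$, induction on $n$ gives $I(B)^n\subseteq J^n$ for every $n\geq1$. Hence
\[
 I(B)^\omega\subseteq\bigcap_{n\geq1}J^n .
\]

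\emph{Step 2.} Show that $\bigcap_n J^n=0$. The ideal $J^n$ is the span of all monomials (words in $X$) of length at least $n$. A nonzero polynomial $f$ is a finite linear combination of words, so it has some maximal word length $N$. Then $f\notin J^{N+1}$, because the homogeneous components of $J^{N+1}$ lie in degrees $\geq N+1$ and the free algebra is graded by word length. Combining Steps 1 and 2 gives $I(B)^\omega=0$.

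\emph{Step 3 (the monoid case).} Let $M\subseteq X^*$ be a submonoid. By Remark~\ref{rem:free-monoid-algebra}, the inclusion $\Bbbk[M]\subseteq\Bbbk[X^*]$ corresponds to an inclusion into $\Bbbk\langle X\rangle$ via the isomorphism $x\mapsto x-1$. The canonical augmentation $w\mapsto1$ on $\Bbbk[X^*]$ corresponds to $x\mapsto0$. Therefore $\Bbbk[M]$, augmented by $w\mapsto1$, is an augmented subalgebra of $\Bbbk\langle X\rangle$ in the required sense, and the first part applies.

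There is no serious obstacle here. The only point needing care is checking that the augmentations match under the identification in Remark~\ref{rem:free-monoid-algebra}. Under the isomorphism a word $w=x_1\cdots x_k$ is sent to $(1+x_1)\cdots(1+x_k)$, whose constant term is $1$. This agrees with $\varepsilon(w)=1$, so the identification is compatible with the augmentations.
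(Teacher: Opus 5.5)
Your proposal is correct and follows the paper's proof: the inclusion $I(B)^n\subseteq (X)^n$, the vanishing of $\bigcap_n (X)^n$ by word length, and the embedding of $\Bbbk[M]$ as an augmented subalgebra via the inverse of the isomorphism in Remark~\ref{rem:free-monoid-algebra}. Your explicit check that the augmentations agree under this identification is a detail the paper leaves implicit.
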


\begin{proof}
If $B\subseteq\Bbbk\langle X\rangle$ is an augmented subalgebra, then
\[
 I(B)^n\subseteq(X)^n.
\]
The powers of $(X)$ have zero intersection by word length, so
$I(B)^\omega=0$. For the final assertion, the inverse of the isomorphism in
Remark~\ref{rem:free-monoid-algebra} embeds $\Bbbk[M]$ as an augmented
subalgebra of $\Bbbk\langle X\rangle$.
\end{proof}

For a monoid homomorphism $\varphi\colon M\to N$, its kernel congruence is
\[
 M\times_NM
 =\{(u_1,u_2)\in M\times M\mid \varphi(u_1)=\varphi(u_2)\}.
\]
Since the category of monoids is monadic over $\mathbf{Set}$, the canonical map onto the image is the coequalizer
\[
 M\times_NM
 \overset{\pi_1}{\underset{\pi_2}{\rightrightarrows}}
 M\xrightarrow{\varphi}\varphi(M)\subseteq N.
\]

\begin{lemma}\label{lem:algebra-kernel}
Let $\varphi\colon M\to N$ be a monoid homomorphism whose kernel congruence is generated by a set $\{(u_\lambda,v_\lambda)\}_{\lambda\in\Lambda}$.
Then the kernel of the induced homomorphism of augmented algebras is the two-sided ideal
\[
 \ker\bigl(\varphi_*\colon\Bbbk[M]\to\Bbbk[N]\bigr)
 =(u_\lambda-v_\lambda\mid\lambda\in\Lambda).
\]
\end{lemma}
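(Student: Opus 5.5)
Let $J=(u_\lambda-v_\lambda\mid\lambda\in\Lambda)$ and $K=\ker(\varphi_*)$. The plan is to prove the two inclusions $J\subseteq K$ and $K\subseteq J$ separately.

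The inclusion $J\subseteq K$ is immediate. Each generating pair lies in the kernel congruence, so $\varphi(u_\lambda)=\varphi(v_\lambda)$, and hence $\varphi_*(u_\lambda-v_\lambda)=0$. Since $K$ is a two-sided ideal, it contains $J$.

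For the reverse inclusion, I will first reduce to differences of congruent pairs. Take $f=\sum_{w}\alpha_w w\in K$ and group its support into the fibres of $\varphi$. The condition $\varphi_*(f)=0$ says exactly that $\sum_{w\in\varphi^{-1}(n)}\alpha_w=0$ for each $n\in N$. Fix a representative $w_n$ in each fibre meeting the support of $f$. Then
\[
 f=\sum_{n}\sum_{w\in\varphi^{-1}(n)}\alpha_w(w-w_n),
\]
so $K$ is spanned by the differences $u_1-u_2$ with $(u_1,u_2)\in M\times_NM$. It therefore suffices to show $u_1-u_2\in J$ for every such pair.

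Next, let $\sim$ be the relation on $M$ defined by $u\sim v$ if and only if $u-v\in J$. This is an equivalence relation, because $J$ is a subspace: $0\in J$ gives reflexivity, negation gives symmetry, and addition gives transitivity. It is also compatible with multiplication. If $u-v\in J$ then $s(u-v)t=sut-svt\in J$ for all $s,t\in M$, and combining this with transitivity shows that $u\sim v$ and $u'\sim v'$ imply $uu'\sim vu'\sim vv'$. So $\sim$ is a monoid congruence. It contains every generating pair $(u_\lambda,v_\lambda)$, so it contains the congruence they generate, which by hypothesis is $M\times_NM$. This gives $u_1-u_2\in J$ for all $(u_1,u_2)\in M\times_NM$, and hence $K\subseteq J$.

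I do not expect a real obstacle here. The only point that needs care is to verify that $\sim$ is an equivalence relation and compatible with multiplication on both sides, so that the minimality of the generated congruence applies. The coequalizer remark preceding the lemma is not needed beyond the fact that $\varphi$ identifies exactly the pairs in $M\times_NM$. The augmentations are respected automatically, since $\varphi_*$ preserves $w\mapsto 1$.
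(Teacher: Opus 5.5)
Your proposal is correct and follows the paper's proof: both show that $u\sim v \Leftrightarrow u-v\in J$ is a monoid congruence containing the generating pairs, hence the whole kernel congruence, and then use the fibre decomposition of $\Bbbk[M]$ over $N$ to see that $\ker\varphi_*$ is spanned by differences $u_1-u_2$ with $\varphi(u_1)=\varphi(u_2)$. You only spell out the congruence verification and the fibre-representative rewriting in more detail than the paper does.
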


\begin{proof}
Let $J=(u_\lambda-v_\lambda\mid\lambda\in\Lambda)$. The relation
\[
 u\sim_Jv\quad\Longleftrightarrow\quad u-v\in J
\]
is a monoid congruence containing every $(u_\lambda,v_\lambda)$. Hence
$\varphi(u)=\varphi(v)$ implies $u-v\in J$. The decomposition
\[
 \Bbbk[M]=\bigoplus_{w\in N}\Bbbk[\varphi^{-1}(w)]
\]
makes $\Bbbk[M]$ an $N$-graded $\Bbbk$-algebra. On the component of degree
$w$, the map $\varphi_*$ is the coefficient-sum map to $\Bbbk w$. Hence
$\ker\varphi_*$ is spanned over $\Bbbk$ by such differences. Thus
$\ker\varphi_*\subseteq J$; the reverse inclusion is immediate.
\end{proof}

\subsection{2-acyclic morphisms}

A morphism $B\to C$ of augmented algebras is called \emph{$2$-acyclic} if
\mbox{$H_1(B)\to H_1(C)$} is an isomorphism and
\mbox{$H_2(B)\to H_2(C)$} is surjective.

Every $2$-acyclic morphism is a para-equivalence by
\cite[Proposition~4.4]{IL21}.

Let $B$ be an augmented algebra and let $X$ be a set. Augment
$\Bbbk\langle X\rangle$ by $x\mapsto0$. The free product
$B*\Bbbk\langle X\rangle$ has the augmentation induced by those of its
factors. Its elements are called \emph{$B$-polynomials}. A $B$-polynomial is
\emph{acyclic} if it lies in the kernel of the morphism
\[
 B*\Bbbk\langle X\rangle\longrightarrow
 \Bbbk\langle X\rangle/(X)^2
\]
that annihilates $I(B)$ and sends each $x\in X$ to its residue class.

For a family $P=(p_x)_{x\in X}$ of acyclic $B$-polynomials, set
\[
 B_P=(B*\Bbbk\langle X\rangle)/(x-p_x\mid x\in X).
\]

\begin{lemma}\label{lem:standard-acyclic}
The canonical morphism $B\to B_P$ is $2$-acyclic.
\end{lemma}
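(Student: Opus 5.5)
We will compute $H_1$ and $H_2$ of $B_P$ in low degrees from an explicit presentation, and compare them with those of $B$. Recall that for any augmented algebra $C$ one has $H_1(C)\cong I(C)/I(C)^2$. Also, if $C=F/R$ with $F$ free on generators mapping into $I(C)$ and $R\subseteq I(F)^2$, then $H_2(C)\cong R/(I(F)R+RI(F))$. The associative version of Hopf's formula gives this more generally: for $C=F/R$ with $F$ free,
\[
H_2(C)\cong \bigl(R\cap I(F)^2\bigr)/\bigl(I(F)R+RI(F)\bigr).
\]

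\emph{Step 1: a presentation.} Choose a free presentation $B=F/R$ with $F=\Bbbk\langle Y\rangle$, augmented by $y\mapsto 0$. Then $B*\Bbbk\langle X\rangle=\Bbbk\langle Y\sqcup X\rangle/(R)$. Lift each $p_x$ to some $\tilde p_x\in \Bbbk\langle Y\sqcup X\rangle$; since $p_x$ is acyclic, $\tilde p_x$ can be chosen with zero constant term, no linear $X$-terms, and linear part in $Y$ only. The difficulty is that $\tilde p_x$ may have linear $Y$-terms, i.e.\ components of $I(B)$ that are nonzero in $I(B)/I(B)^2$. Acyclicity only kills the $X$-linear part modulo $(X)^2$ and $I(B)$. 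So we get
\[
B_P=\Bbbk\langle Y\sqcup X\rangle/(R+(x-\tilde p_x)),
\]
and abelianizing the augmentation ideal gives
\[
H_1(B_P)=\bigl(\Bbbk Y\oplus\Bbbk X\bigr)/\bigl(\text{lin}(R)+\{x-\text{lin}_Y(\tilde p_x)\}\bigr)\cong \Bbbk Y/\text{lin}(R)=H_1(B).
\]
Each $x$ is thus eliminated in favour of a class in $H_1(B)$, and this shows that $H_1(B)\to H_1(B_P)$ is an isomorphism.

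\emph{Step 2: surjectivity on $H_2$.} Set $G=\Bbbk\langle Y\sqcup X\rangle$ and $S=(R)+(x-\tilde p_x\mid x\in X)$. Take a class in $(S\cap I(G)^2)/(I(G)S+SI(G))$. Modulo $I(G)S+SI(G)$, any $s\in S$ can be written as
\[
s\equiv r+\sum_x \lambda_x(x-\tilde p_x)
\]
with $r\in R$ (extended to $G$) and scalars $\lambda_x\in\Bbbk$. This holds because $S/(I(G)S+SI(G))$ is spanned by the images of the generators. The condition $s\in I(G)^2$ forces the linear part to vanish. The linear part of $x-\tilde p_x$ contains $x$ itself, while the linear part of $r$ involves only $Y$. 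Hence all $\lambda_x=0$, and $s\equiv r$ with $r\in R\cap I(F)^2$. Here we use that $R\subseteq F$ and that the linear part of $r$ vanishes, so $r\in I(F)^2$. Therefore the class comes from $H_2(B)$, which gives surjectivity.

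\emph{Main obstacle.} The delicate point is the bookkeeping in Step 2: the spanning claim and the linear-part argument inside the free algebra $G$. In particular, we must check that the images of $R$ and of the $x-\tilde p_x$ span $S/(I(G)S+SI(G))$ over $\Bbbk$. This holds because $S$ is generated as an ideal by these elements, and multiplying a generator by any element of $G$ gives a scalar multiple of it modulo $I(G)S+SI(G)$. We also need naturality of the Hopf isomorphism for the map $F\to G$, so that the class $r$ indeed corresponds to the image of $H_2(B)$. Alternatively, both steps can be phrased uniformly via the five-term exact sequence attached to the surjection $B*\Bbbk\langle X\rangle\to B_P$ with kernel generated by the $x-p_x$. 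This route combines $H_i(B*\Bbbk\langle X\rangle)=H_i(B)\oplus H_i(\Bbbk\langle X\rangle)$ for $i\ge1$ with the vanishing $H_2(\Bbbk\langle X\rangle)=0$.
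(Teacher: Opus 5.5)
The paper gives no argument of its own for this lemma; it simply cites \cite[Proposition~4.6]{IL21}. Your proof is correct and self-contained.

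You work with the presentations $B=F/R$ and $B_P=G/S$. Acyclicity of $p_x$ is used exactly where it is needed: it guarantees that every lift $\tilde p_x$ has no constant term and no linear $X$-terms. Hence the linear part of $x-\tilde p_x$ has $X$-component exactly $x$. This makes these classes linearly independent modulo $\operatorname{lin}(R)$ in $I(G)/I(G)^2$. That single fact does two jobs. It eliminates each $x$ from $H_1(B_P)$ in Step~1. It also forces $\lambda_x=0$ in Step~2, so every class in $(S\cap I(G)^2)/(I(G)S+SI(G))$ is represented by some $r\in R\cap I(F)^2$.

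Your spanning claim is the same observation the paper uses in Lemma~\ref{lem:fp-homology}, namely $[urv]=\varepsilon(u)\varepsilon(v)[r]$. Naturality of the Hopf isomorphism under the morphism of presentations $F\hookrightarrow G$ is standard, since it comes from naturality of the five-term sequence.

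Your alternative route at the end, via the five-term sequence of $B*\Bbbk\langle X\rangle\twoheadrightarrow B_P$ with $\mathfrak a=(x-p_x\mid x\in X)$, runs the same linear-independence argument on the map $\mathfrak a/(I\mathfrak a+\mathfrak aI)\to H_1(B*\Bbbk\langle X\rangle)$. It is more conceptual, but it needs the free-product formula $H_i(B*\Bbbk\langle X\rangle)\cong H_i(B)\oplus H_i(\Bbbk\langle X\rangle)$ for $i\geq1$. Your presentation-level argument avoids that formula. The price is choosing a free presentation of $B$ and invoking naturality of Hopf's formula. When you choose that presentation, state explicitly that the generators $Y$ map into $I(B)$, so that $F\to B$ is a morphism of augmented algebras and $R\subseteq I(F)$.
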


\begin{proof}
See \cite[Proposition~4.6]{IL21}.
\end{proof}

\subsection{Gr\"obner--Shirshov bases and homology}

We compute homology below using Anick's resolution, which is constructed from
a Gr\"obner--Shirshov basis.

Let $X^*$ be the free monoid on a set $X$. An order $\leq$ on $X^*$ is
\emph{admissible} if it is a well-order, $1<w$ for $w\neq1$, and
\[
 u<v\quad\Longrightarrow\quad aub<avb,
 \qquad a,b\in X^*.
\]
For a nonzero polynomial
\[
 f=\sum_{w\in X^*}f_w w\in\Bbbk\langle X\rangle,
\]
set
\[
 \operatorname{supp}(f)=\{w\in X^*\mid f_w\neq0\},
 \qquad
 \mathsf{MT}(f)=\max\operatorname{supp}(f).
\]
We call $\mathsf{MT}(f)$ the \emph{maximal term} of $f$, and $f$ \emph{monic}
if $f_{\mathsf{MT}(f)}=1$.

For a set $R$ of nonzero polynomials, put
$\mathsf{MT}(R)=\{\mathsf{MT}(r)\mid r\in R\}$. A monic set
$R\subseteq\Bbbk\langle X\rangle$
\mbox{is a \emph{Gr\"obner--Shirshov basis}} if
\[
 \mathsf{MT}\bigl((R)\setminus\{0\}\bigr)
 =X^*\mathsf{MT}(R)X^*,
\]
where $(R)$ is the two-sided ideal generated by $R$; see
\cite[Section~3]{IL21}.

A monic set $R$ is \emph{reduced} if its maximal terms are pairwise distinct
and
\[
 \operatorname{supp}\bigl(r-\mathsf{MT}(r)\bigr)
 \cap X^*\mathsf{MT}(R)X^*=\varnothing,
 \qquad r\in R.
\]
A word properly contains another if the latter occurs as a proper subword.
Two words have a \emph{proper overlap} if, after possibly interchanging them,
they have the form $uv$ and $vw$ with $u,v,w\neq1$.

If $\Bbbk\langle X\rangle$ is augmented by $x\mapsto0$, let
$\operatorname{lin}(g)$ denote the homogeneous degree-one part of
$g\in\Bbbk\langle X\rangle$.

\begin{lemma}\label{lem:anick}
Let
\[
 B=\Bbbk\langle z_1,\ldots,z_m\rangle/(g_\lambda\mid\lambda\in\Lambda),
 \qquad z_i\mapsto0.
\]
Suppose the $g_\lambda$ form a reduced Gr\"obner--Shirshov basis whose
maximal terms have neither proper inclusions nor proper overlaps. Then
\[
 H_2(B)\cong
 \ker\left(
 \bigoplus_{\lambda\in\Lambda}\Bbbk[g_\lambda]
 \xrightarrow{[g_\lambda]\mapsto\operatorname{lin}(g_\lambda)}
 \bigoplus_{i=1}^m\Bbbk z_i
 \right).
\]
If $\operatorname{lin}(g_\lambda)=0$, the corresponding class is represented
by $g_\lambda$ in the Hopf formula.
\end{lemma}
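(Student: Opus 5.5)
Since the maximal terms have neither proper inclusions nor proper overlaps, the Anick chains of length at most two are easy to describe. I will compute $H_2(B)$ from Anick's resolution in low degrees rather than through a general comparison. Write $V=\mathsf{MT}(\{g_\lambda\})$ for the set of obstructions, that is, the leading words. The Anick $0$-chains are the single letters $z_i$, the $1$-chains are the words in $V$, and the $2$-chains are words $uvw$ arising from a proper overlap of two obstructions; a chain of this kind may also be an overlap of a leading word with itself. Since there are no proper overlaps (including self-overlaps), there are no $2$-chains, and the resolution in low degrees reads
\[
 0\longleftarrow\Bbbk\longleftarrow B\longleftarrow B\otimes\Bbbk X\longleftarrow B\otimes\Bbbk V\longleftarrow 0 .
\]
Reducedness together with the absence of proper inclusions guarantees that the Anick chains are exactly as stated and that the normal words are those avoiding $X^*VX^*$. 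In fact, in this situation the resolution is just the standard free resolution $B\otimes\Bbbk V\to B\otimes\Bbbk X\to B\to\Bbbk$ attached to a combinatorially free ("inert") presentation.

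Next I compute the differentials. For $v=\mathsf{MT}(g_\lambda)$, Anick's differential sends $[v]$ to the Fox-derivative expression $\sum_w \alpha_w\, \overline{w'}\otimes z$, where $g_\lambda=\sum\alpha_w w$, each word $w=w'z$ is split off at its last letter $z$, and overlines denote normal forms in $B$. It is convenient to write this with right modules, so that the last letter is the one split off; this uses the standard form of Anick's $d_2$. Applying $\Bbbk\otimes_B-$ kills $I(B)$, so only words of length one contribute. The induced map $\Bbbk V\to\Bbbk X$ is therefore $[g_\lambda]\mapsto\operatorname{lin}(g_\lambda)$. Moreover, $g_\lambda$ has zero constant term: it lies in the augmentation ideal because $B$ is augmented by $z_i\mapsto0$, and if the constant term were nonzero the ideal would be the whole algebra.

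Then I read off homology. The complex computing $\operatorname{Tor}$ is $\Bbbk V\to\Bbbk X\to\Bbbk$; the last map is zero, and there are no $3$-chains. Hence $H_2(B)$ is the kernel of $[g_\lambda]\mapsto\operatorname{lin}(g_\lambda)$, as claimed.

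For the Hopf-formula statement, I identify $H_2(B)\cong (R\cap I^2+ \ldots)$. More precisely, take $F=\Bbbk\langle z\rangle$, $R=(g_\lambda)$ and $I=(z)$, and use the standard isomorphism
\[
 H_2(B)\cong (R\cap I^2)/(IR+RI).
\]
I then check that the comparison map sends an element of $\ker$, viewed as a combination $\sum c_\lambda g_\lambda\in R$ with vanishing linear part, hence lying in $I^2$, to its class. Concretely, $R/(IR+RI)\cong\Bbbk V$ via the basis $[g_\lambda]$, and $R\cap I^2$ maps onto exactly those combinations with zero linear part. The identification $R/(IR+RI)\cong\Bbbk V$ is the main obstacle. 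It requires that the $g_\lambda$ be linearly independent modulo $IR+RI$, which is precisely the exactness at $B\otimes\Bbbk V$ supplied by Anick's resolution (equivalently, the absence of $2$-chains). I would cite Anick's theorem for that exactness rather than reprove it. In particular, when $\operatorname{lin}(g_\lambda)=0$, the class of $g_\lambda$ itself represents $[g_\lambda]$.
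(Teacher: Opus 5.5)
Your proposal is correct and follows the paper's route. You use Anick's resolution; the absence of proper inclusions and overlaps kills all chains beyond the obstructions; and after tensoring with $\Bbbk$ the degree-two differential becomes $[g_\lambda]\mapsto\operatorname{lin}(g_\lambda)$. For the Hopf-formula assertion the paper simply cites Ivanov--Lopatkin, and your argument (freeness of $R/RI$ on the classes of the $g_\lambda$, read off from exactness of Anick's resolution, giving $R/(IR+RI)\cong\Bbbk V$) is a valid unpacking of that citation.

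Two small slips do not affect the argument. First, the formula $\sum_w\alpha_w\overline{w'}\otimes z$, which splits off the last letter, is the left-module form; the right-module form splits off the first letter. Second, a nonzero constant term does not force the ideal to be the whole algebra (e.g.\ $(1+z_1)$ is proper). Your primary justification, that the augmentation $z_i\mapsto0$ must be well defined on $B$, is the correct one.
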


\begin{proof}
The maximal terms index the degree-two generators in Anick's resolution. The
hypotheses exclude all $3$-chains, so the resolution has the form
\[
 0\longrightarrow\bigoplus_{\lambda\in\Lambda}B
 \longrightarrow\bigoplus_{i=1}^mB
 \longrightarrow B\longrightarrow \Bbbk\longrightarrow0
\]
\cite[Theorem~1.4]{Ani86}. After tensoring with $\Bbbk$, the degree-two differential is
the linearization map \cite[Section~2, especially the formula on p.~648]{Ani86}.
This proves the formula for $H_2(B)$. The final assertion follows from the
Hopf formula \cite[Corollary~4.2 and Remark~4.3]{IL21}.
\end{proof}

\section{Residual nilpotence of \texorpdfstring{$A$}{A}}\label{sec:monoid}

Let $L$ be the submonoid of the free monoid $\{X_1,X_2,X_3,X_4\}^*$ generated by
the six-element set
\[
 \{X_1,X_2X_3,X_2X_4,X_1X_2,X_3X_2,X_4\}.
\]
It is a well-known example of a finitely generated submonoid of a free monoid
that is not finitely presented; see \cite[Example~3.7]{SZ18}. In this section
we show that the algebra $A$ given by \eqref{eq:def-A} is isomorphic to the
monoid algebra $\Bbbk[L]$. In particular, Lemma~\ref{lem:free-monoid-subalgebra}
will imply that $A$ is residually nilpotent.

Define a homomorphism of free monoids
\[
 \theta\colon\{Y_1,\ldots,Y_6\}^*\longrightarrow
 \{X_1,X_2,X_3,X_4\}^*
\]
by
\[
 \begin{aligned}
 \theta(Y_1)&=X_1, & \theta(Y_2)&=X_2X_3,& \theta(Y_3)&=X_2X_4,\\
 \theta(Y_4)&=X_1X_2,& \theta(Y_5)&=X_3X_2,& \theta(Y_6)&=X_4.
 \end{aligned}
\]

\begin{lemma}\label{lem:monoid-kernel}
The kernel congruence of $\theta$ is generated by
\[
 Y_1Y_2^\ell Y_3=Y_4Y_5^\ell Y_6,
 \qquad \ell\geq0.
\]
\end{lemma}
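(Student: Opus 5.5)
First, the listed relations hold: $\theta(Y_1Y_2^\ell Y_3)=X_1(X_2X_3)^\ell X_2X_4$ and $\theta(Y_4Y_5^\ell Y_6)=X_1X_2(X_3X_2)^\ell X_4$. Both equal $X_1X_2(X_3X_2)^{\ell}X_4$, after regrouping $X_1(X_2X_3)^\ell X_2 = X_1X_2(X_3X_2)^\ell$. Let $\sim$ be the congruence generated by these pairs. Then $\sim$ is contained in the kernel congruence, and it remains to show the converse. The plan is to show that any two words $u\neq v$ in $\{Y_1,\dots,Y_6\}^*$ with $\theta(u)=\theta(v)$ can be transformed into one another by these relations. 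We argue by induction on the length of $\theta(u)$.

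\textbf{Step 1: peel off equal first letters.} If $u$ and $v$ begin with the same letter $Y_i$, we cancel it, using left cancellativity of the free monoid, and apply induction. So assume the first letters differ. The first letter of $\theta(u)$ determines which generators can start: $X_1$ allows $Y_1$ or $Y_4$; $X_2$ allows $Y_2$ or $Y_3$; $X_3$ allows only $Y_5$; $X_4$ allows only $Y_6$. A short check shows that $Y_2$ versus $Y_3$ is impossible, because the second letters $X_3\neq X_4$ differ. Hence, up to symmetry, $u=Y_1u'$ and $v=Y_4v'$.

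\textbf{Step 2: parse the forced continuation.} After the common prefix $X_1$, $\theta(u')$ begins with $X_2$, so $u'$ begins with $Y_2$ or $Y_3$. Meanwhile $\theta(v')$ must begin with $\theta(Y_2)$ or $\theta(Y_3)$ minus the leading $X_2$, i.e.\ with $X_3\cdots$ or $X_4$. This forces $v'$ to begin with $Y_5$ or $Y_6$. If $u'=Y_2u''$, then $v'$ must begin with $Y_5$. After matching $X_1X_2X_3X_2$ we are again in the same offset-by-one situation.

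We iterate this parsing: the two factorizations stay out of phase by one letter, with $u$ reading $Y_1Y_2^k\cdots$ and $v$ reading $Y_4Y_5^k\cdots$. The offset can only be resolved when $u$ uses $Y_3$ and $v$ uses $Y_6$ at the same point, which gives $u=Y_1Y_2^\ell Y_3u_1$ and $v=Y_4Y_5^\ell Y_6v_1$ with $\theta(u_1)=\theta(v_1)$. We must also rule out mismatched endings (one word ending mid-phase), which are impossible by length or by letter mismatch. Once that is done, we apply the relation for this $\ell$ to replace the prefix, and then use induction on $(u_1,v_1)$, whose image is shorter.

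\textbf{Main obstacle.} The delicate part is the case analysis in Step 2, which is essentially a Sardinas--Patterson style dangling-suffix computation. We must verify that the only dangling suffixes arising are $X_2$ (on the $u$ side) and nothing else. With this, the set of "ambiguity patterns" is exactly $\{(Y_1Y_2^\ell Y_3,\,Y_4Y_5^\ell Y_6)\}$.

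I would organise the argument by tracking the dangling suffix. At each stage the dangling suffix is the word $X_2$ owed by $v$'s side: after reading $Y_4$ (image $X_1X_2$) against $Y_1$ (image $X_1$), the $v$-side is ahead by $X_2$. Then $Y_2$ or $Y_3$ on the $u$-side consumes that $X_2$ plus one more letter, either $X_3$ or $X_4$. That letter must in turn be the start of $Y_5$ (which again leaves a suffix $X_2$ ahead on the $v$-side) or exactly $Y_6$ (which leaves the empty suffix). Checking that no other generator fits at each step completes the argument.
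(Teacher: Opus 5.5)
Your proof is correct and follows essentially the same route as the paper. Both arguments induct on the length of the common image, cancel equal first factors, and reduce distinct first factors to $\{Y_1,Y_4\}$; the paper phrases that step as prefix-comparability of $\theta(Y_i)$ and $\theta(Y_j)$, and your first-letter check with $X_3\neq X_4$ unpacks it. Both then force the parsing $Y_1Y_2^\ell Y_3$ against $Y_4Y_5^\ell Y_6$ and apply induction to the shorter remaining suffixes, which is the same dangling-suffix argument your Sardinas--Patterson framing describes.
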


\begin{proof}
The displayed words have the common image
\[
 X_1(X_2X_3)^\ell X_2X_4
 =X_1X_2(X_3X_2)^\ell X_4.
\]
Let $\equiv$ be the congruence generated by the relations in the statement. We prove
$\theta(u)=\theta(v)\Rightarrow u\equiv v$ by induction on the length of the
common image. The empty image is immediate. If $u$ and $v$ have the same
first factor, cancel it and apply induction. Otherwise, let their first
factors be $Y_i$ and $Y_j$, where $i\neq j$. If
\[
 u=Y_i u',
 \qquad
 v=Y_j v',
\]
then
\[
 \theta(Y_i)\theta(u')=\theta(Y_j)\theta(v').
\]
Thus one of $\theta(Y_i)$ and $\theta(Y_j)$ is a prefix of the other: any two
prefixes of the same word are comparable under inclusion, and the shorter is
a prefix of the longer. In the set
$\{X_1,X_2X_3,X_2X_4,X_1X_2,X_3X_2,X_4\}$, the only distinct words with this
property are $X_1$ and $X_1X_2$. Hence
$\{Y_i,Y_j\}=\{Y_1,Y_4\}$.

After interchanging $u$ and $v$ if necessary, one has
\[
 u=Y_1Y_2^\ell Y_3u'',
 \qquad
 v=Y_4Y_5^\ell Y_6v'',
 \qquad
 \theta(u'')=\theta(v'')
\]
for some $\ell\geq0$. Indeed, cancelling the initial $X_1$ gives
$\theta(u')=X_2\theta(v')$, so the next factor of $u'$ is $Y_2$ or $Y_3$. In
the first case the next factor of $v'$ is necessarily $Y_5$ and the comparison
repeats; in the second it is necessarily $Y_6$ and the remaining images are
equal. Since the words are finite, the second case must occur.
Therefore
\[
 u\equiv Y_4Y_5^\ell Y_6u''
 \equiv Y_4Y_5^\ell Y_6v''=v,
\]
where the second equivalence follows by induction.
\end{proof}

Let
\[
 \begin{aligned}
 \varphi&\colon\Bbbk\langle a,b,c,d,x,y\rangle
 \longrightarrow\Bbbk[\{Y_1,Y_2,Y_3,Y_4,Y_5,Y_6\}^*],\\
 \psi&\colon\Bbbk\langle x_1,x_2,x_3,x_4\rangle
 \longrightarrow\Bbbk[\{X_1,X_2,X_3,X_4\}^*]
 \end{aligned}
\]
be the isomorphisms of augmented algebras that are given, respectively, by
\begin{equation}\label{eq:variables}
 \begin{aligned}
 (a,b,c,d,x,y)&\longmapsto
 (Y_1-1,Y_2-1,Y_3-1,Y_4-1,Y_5-1,Y_6-1),\\
 (x_1,x_2,x_3,x_4)&\longmapsto
 (X_1-1,X_2-1,X_3-1,X_4-1).
 \end{aligned}
\end{equation}
Note that
\[
 \begin{aligned}
 \varphi(e_\ell)
 &\overset{\mathclap{\eqref{eq:def-e}}}{=}
 \varphi\bigl((1+a)(1+b)^\ell(1+c)-(1+d)(1+x)^\ell(1+y)\bigr)\\
 &=Y_1Y_2^\ell Y_3-Y_4Y_5^\ell Y_6,
 \qquad \ell\geq0.
 \end{aligned}
\]
\begin{corollary}\label{cor:monoid}
The algebra $A$ given by \eqref{eq:def-A} is isomorphic to $\Bbbk[L]$ as an
augmented algebra. In particular, $I(A)^\omega=0$.
\end{corollary}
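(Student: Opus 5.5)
We compose the isomorphism $\varphi$ with the algebra map induced by $\theta$, and identify the kernel with the ideal generated by the $e_\ell$. Concretely, $\theta$ induces a homomorphism of augmented monoid algebras
\[
 \theta_*\colon\Bbbk[\{Y_1,\ldots,Y_6\}^*]\longrightarrow\Bbbk[\{X_1,X_2,X_3,X_4\}^*].
\]
Its image is spanned by the words $\theta(w)$, that is, by the elements of $L$; hence the image is exactly the subalgebra $\Bbbk[L]$. The augmentation is compatible, since both algebras are augmented by sending every word to $1$.

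Next we compute $\ker\theta_*$. By Lemma~\ref{lem:monoid-kernel}, the kernel congruence of $\theta$ is generated by the pairs $(Y_1Y_2^\ell Y_3,\,Y_4Y_5^\ell Y_6)$ for $\ell\geq0$. Lemma~\ref{lem:algebra-kernel} then gives
\[
 \ker\theta_*=\bigl(Y_1Y_2^\ell Y_3-Y_4Y_5^\ell Y_6\mid \ell\geq0\bigr).
\]
Since $\varphi$ is an isomorphism and $\varphi(e_\ell)=Y_1Y_2^\ell Y_3-Y_4Y_5^\ell Y_6$, the map $\varphi$ carries the ideal $(e_\ell\mid\ell\geq0)$ onto $\ker\theta_*$. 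Therefore $\theta_*\circ\varphi$ descends to an isomorphism
\[
 A=\Bbbk\langle a,b,c,d,x,y\rangle/(e_\ell\mid\ell\geq0)\xrightarrow{\ \cong\ }\Bbbk[L].
\]
This is a morphism of augmented algebras, because $\varphi$ sends each generator to some $Y_i-1$, whose augmentation is $0$, and $\theta_*$ preserves augmentations.

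Finally, $L$ is a submonoid of the free monoid $\{X_1,\ldots,X_4\}^*$, so Lemma~\ref{lem:free-monoid-subalgebra} shows that $\Bbbk[L]$ is residually nilpotent. Since the isomorphism preserves augmentation ideals and their powers, $I(A)^\omega=0$.

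The only substantive input is Lemma~\ref{lem:monoid-kernel}, which is already proved. The one point needing care is that the image of $\theta_*$ is precisely $\Bbbk[L]$ with its canonical augmentation, and not merely a subalgebra isomorphic to it; this holds because distinct elements of $L$ are distinct words and are therefore linearly independent in the free monoid algebra.
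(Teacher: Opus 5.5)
Your proposal is correct and follows essentially the same route as the paper: you identify $\ker(\theta_*\circ\varphi)$ with $(e_\ell\mid\ell\geq0)$ via Lemmas~\ref{lem:monoid-kernel} and~\ref{lem:algebra-kernel}, and you identify the image with $\Bbbk[L]$. You then deduce $I(A)^\omega=0$ from Lemma~\ref{lem:free-monoid-subalgebra}; the paper leaves this last step implicit, and you spell out the augmentation and image details more carefully.
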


\begin{proof}
Lemmas~\ref{lem:monoid-kernel} and~\ref{lem:algebra-kernel}, together with the
preceding computation, give
\[
 \ker(\theta_*\circ\varphi)=(e_\ell\mid\ell\geq0).
\]
Thus $A\cong\operatorname{im}(\theta_*\circ\varphi)=\Bbbk[L]$.
\end{proof}

\begin{samepage}
\begin{remark}\label{rem:free-embedding}
The composite
\[
 \Bbbk\langle a,b,c,d,x,y\rangle
 \xrightarrow{\varphi}\Bbbk[\{Y_1,\ldots,Y_6\}^*]
 \xrightarrow{\theta_*}\Bbbk[\{X_1,\ldots,X_4\}^*]
 \xrightarrow{\psi^{-1}}\Bbbk\langle x_1,\ldots,x_4\rangle
\]
acts by
\begin{equation}\label{eq:free-embedding-map}
 \begin{aligned}
 a&\longmapsto x_1,&
 b&\longmapsto x_2+x_3+x_2x_3,&
 c&\longmapsto x_2+x_4+x_2x_4,\\
 d&\longmapsto x_1+x_2+x_1x_2,&
 x&\longmapsto x_2+x_3+x_3x_2,&
 y&\longmapsto x_4.
 \end{aligned}
\end{equation}
The correspondence \eqref{eq:free-embedding-map} identifies $A$ with a
subalgebra of the free algebra
$\Bbbk\langle x_1,x_2,x_3,x_4\rangle$.
\end{remark}
\end{samepage}

\section{Parafreeness of \texorpdfstring{$A$}{A}}\label{sec:completion}

In this section we prove that the algebra $A$ given by \eqref{eq:def-A} is
parafree of rank four via the canonical map
$\Bbbk\langle a,b,c,d\rangle\to A$.

For $\ell\geq0$, set
\begin{equation}\label{eq:def-f}
 f_\ell=(1+a)b^\ell(1+c)-(1+d)x^\ell(1+y).
\end{equation}

\begin{lemma}\label{lem:change-relations}
For every $\ell\geq0$,
\[
 f_\ell=\sum_{j=0}^\ell(-1)^{\ell-j}\binom{\ell}{j}e_j,
 \qquad
 e_\ell=\sum_{j=0}^\ell\binom{\ell}{j}f_j.
\]
Consequently,
\[
 (e_\ell\mid\ell\geq0)=(f_\ell\mid\ell\geq0),
 \qquad
 (e_0,e_1)=(f_0,f_1).
\]
\end{lemma}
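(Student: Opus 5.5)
The plan is to obtain both identities from binomial expansion, using the fact that the terms with $(1+a)$, $(1+c)$, $(1+d)$, $(1+y)$ enter only as fixed left and right multipliers. Since $1$ commutes with $b$ and with $x$, the binomial theorem applies in the noncommutative algebra:
\[
 (1+b)^\ell=\sum_{j=0}^\ell\binom{\ell}{j}b^j,
 \qquad
 (1+x)^\ell=\sum_{j=0}^\ell\binom{\ell}{j}x^j.
\]

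First I will prove the second identity. Multiplying the first expansion on the left by $(1+a)$ and on the right by $(1+c)$, and the second by $(1+d)$ and $(1+y)$, then subtracting, gives
\[
 e_\ell=\sum_{j=0}^\ell\binom{\ell}{j}\bigl((1+a)b^j(1+c)-(1+d)x^j(1+y)\bigr)=\sum_{j=0}^\ell\binom{\ell}{j}f_j .
\]
The coefficients are the same integers, viewed in $\Bbbk$, for both halves of the difference, so the subtraction is legitimate.

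Next I will obtain the first identity from the second by binomial inversion. The matrix $\bigl(\binom{\ell}{j}\bigr)_{0\le j\le\ell}$ is lower unitriangular with integer entries, and its inverse is $\bigl((-1)^{\ell-j}\binom{\ell}{j}\bigr)$. This follows from the standard identity
\[
 \sum_{j=k}^{\ell}(-1)^{\ell-j}\binom{\ell}{j}\binom{j}{k}=\binom{\ell}{k}\sum_{i=0}^{\ell-k}(-1)^{\ell-k-i}\binom{\ell-k}{i}=\binom{\ell}{k}\,\delta_{\ell k}.
\]
Substituting the second identity into $\sum_j(-1)^{\ell-j}\binom{\ell}{j}e_j$ then yields $f_\ell$. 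Alternatively, one can expand $b^\ell=((1+b)-1)^\ell$ directly. The inversion is the only step with any content, and it holds over $\mathbb Z$, so it is valid in every characteristic.

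For the consequences, I will use that each $e_\ell$ is a $\Bbbk$-linear combination of $f_0,\dots,f_\ell$, and each $f_\ell$ is a $\Bbbk$-linear combination of $e_0,\dots,e_\ell$. Hence the two families generate the same two-sided ideal. The same triangularity restricted to $\ell\le1$, namely $e_0=f_0$, $e_1=f_0+f_1$ and $f_1=e_1-e_0$, gives $(e_0,e_1)=(f_0,f_1)$.
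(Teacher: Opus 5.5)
Your proposal is correct and is essentially the paper's argument: a binomial expansion gives one identity, and binomial inversion gives the other. The only difference is the order. The paper proves the first formula directly from $\sum_{j=0}^\ell(-1)^{\ell-j}\binom{\ell}{j}(1+u)^j=u^\ell$ with $u=b,x$ (your alternative expansion of $((1+b)-1)^\ell$) and then inverts, whereas you expand $(1+b)^\ell$ directly and invert to get the first formula.
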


\begin{proof}
The first formula follows from \eqref{eq:def-e} and the binomial identity
\[
 \sum_{j=0}^\ell(-1)^{\ell-j}\binom{\ell}{j}(1+u)^j=u^\ell
\]
by substituting $u=b$ and $u=x$. The second formula follows by binomial
inversion.
\end{proof}

Thus, for the algebras $A$ and $A_0$ given by \eqref{eq:def-A} and
\eqref{eq:def-A0}, respectively,
\[
 A=\Bbbk\langle a,b,c,d,x,y\rangle/(f_\ell\mid\ell\geq0),
 \qquad
 A_0=\Bbbk\langle a,b,c,d,x,y\rangle/(f_0,f_1).
\]

The first two relations are
\begin{equation}\label{eq:first-relations}
 \begin{aligned}
 f_0&=a+c-d-y+ac-dy,\\
 f_1&=b-x+ab+bc-dx-xy+abc-dxy.
 \end{aligned}
\end{equation}

\begin{lemma}\label{lem:elimination}
Let $a,b,c,d,x,y$ be elements of a unital algebra such that $1+a$, $1+c$,
and $1+d$ are invertible. Set $h=(1+a)^{-1}(1+d)$. Then $f_0=f_1=0$ if
and only if
\begin{equation}\label{eq:elimination}
 x=h^{-1}bh,
 \qquad
 1+y=h^{-1}(1+c).
\end{equation}
The identities \eqref{eq:elimination} imply $f_\ell=0$ for all $\ell\geq0$.
\end{lemma}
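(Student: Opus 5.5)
Work directly with the factored forms $f_0=(1+a)(1+c)-(1+d)(1+y)$ and $f_1=(1+a)b(1+c)-(1+d)x(1+y)$. These follow from \eqref{eq:def-f}, and one can also check them against \eqref{eq:first-relations} by expanding. Write $u=1+a$, $v=1+c$, $w=1+d$, $z=1+y$, so that $h=u^{-1}w$.

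\textbf{Step 1: $f_0=0$.} The relation $f_0=0$ reads $uv=wz$. Multiplying on the left by $w^{-1}$, this is equivalent to $z=w^{-1}uv=h^{-1}(1+c)$, which is the second identity in \eqref{eq:elimination}. The multiplication is reversible because $w$ is invertible. Note also that in this case $z$ is invertible, being a product of invertible elements.

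\textbf{Step 2: $f_1=0$, assuming $f_0=0$.} The relation $f_1=0$ reads $ubv=wxz$. Substitute $z=w^{-1}uv$ and cancel $v$ on the right, using that $v$ is invertible. This gives the equivalent relation $ub=wxw^{-1}u$, that is, $x=w^{-1}ubu^{-1}w=h^{-1}bh$. Each step is reversible, since only invertible factors are cancelled. Hence $f_0=f_1=0$ holds if and only if \eqref{eq:elimination} holds.

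\textbf{Step 3: all $f_\ell$ vanish.} Assume \eqref{eq:elimination}. Then $x^\ell=h^{-1}b^\ell h$ by telescoping, and this is valid for $\ell=0$ as well. Therefore
\[
(1+d)x^\ell(1+y)=w\,h^{-1}b^\ell h\,h^{-1}v=w\,w^{-1}u\,b^\ell\,v=(1+a)b^\ell(1+c),
\]
so $f_\ell=0$.

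There is no real obstacle in this argument. The only care needed is to keep the multiplication side-consistent (left versus right) in the noncommutative cancellations, and to check that every element inverted is one of the assumed invertible ones.
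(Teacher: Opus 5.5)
Your proof is correct and follows essentially the paper's route. The paper phrases the equivalence via the factorizations $f_0=(1+a)\bigl((1+c)-h(1+y)\bigr)$ and, when $f_0=0$, $f_1=(1+a)(b-hxh^{-1})(1+c)$. Your cancellations of the invertible factors $w$ and $v$ do the same thing, and you finish with the same substitution $x^\ell=h^{-1}b^\ell h$.
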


\begin{proof}
The identities
\[
 f_0=(1+a)\bigl((1+c)-h(1+y)\bigr)
\]
and, when $f_0=0$,
\[
 f_1=(1+a)(b-hxh^{-1})(1+c)
\]
give the equivalence. Under \eqref{eq:elimination}, one has
$x^\ell=h^{-1}b^\ell h$, and substitution in \eqref{eq:def-f} gives
$f_\ell=0$.
\end{proof}

\begin{lemma}\label{lem:a0-acyclic}
The map $\Bbbk\langle a,b,c,d\rangle\to A_0$ is a para-equivalence, and
$H_2(A_0)=0$.
\end{lemma}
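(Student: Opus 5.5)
Our plan is to recognize $A_0$ as an algebra of the form $B_P$ with $B=\Bbbk\langle a,b,c,d\rangle$, and then apply Lemma~\ref{lem:standard-acyclic} together with \cite[Proposition~4.4]{IL21}. The presentation \eqref{eq:def-A0} already has the required shape: the relations $f_0=0$ and $f_1=0$ of \eqref{eq:first-relations} can be solved for $y$ and $x$, giving
\[
 y=p_y:=a+c-d+ac-dy,\qquad x=p_x:=b+ab+bc-dx-xy+abc-dxy.
\]
These expressions, however, still involve $x$ and $y$ on the right-hand side. We therefore regard $p_x,p_y$ as elements of $B*\Bbbk\langle x,y\rangle$. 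By Lemma~\ref{lem:change-relations} we have $(f_0,f_1)=(e_0,e_1)$, so $A_0=(B*\Bbbk\langle x,y\rangle)/(x-p_x,\,y-p_y)=B_P$ with $P=(p_x,p_y)$.

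The first step is to check that $p_x$ and $p_y$ are acyclic $B$-polynomials. This means that their images vanish in $\Bbbk\langle x,y\rangle/(x,y)^2$ once $I(B)$ is killed. In $p_y$ the only term not lying in $I(B)$ is $-dy$, and it contains $d\in I(B)$. Likewise every monomial of $p_x$ other than the pure $B$-terms contains a letter from $\{a,b,c,d\}$ (namely $-dx$, $-dxy$) or is quadratic in $x,y$ (namely $-xy$). Hence both images are zero, and $P$ is an acyclic family. Lemma~\ref{lem:standard-acyclic} then shows that $B\to A_0$ is $2$-acyclic, and \cite[Proposition~4.4]{IL21} upgrades this to a para-equivalence.

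Next, for $H_2(A_0)=0$ we use the definition of $2$-acyclicity: $H_2(B)\to H_2(A_0)$ is surjective, and $H_2$ of a free algebra vanishes, for instance by Lemma~\ref{lem:anick} applied with the empty relation set. Therefore $H_2(A_0)=0$.

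The only delicate point we expect is bookkeeping. We must confirm that the ideal $(x-p_x,y-p_y)$ literally equals $(f_0,f_1)$; it does, because $x-p_x=-f_1$ and $y-p_y=-f_0$ after expanding \eqref{eq:first-relations}. We must also confirm that the definition of acyclicity in \cite{IL21} allows $p_x$ to involve the new variables, which the present definition, being stated in $B*\Bbbk\langle X\rangle$, does. If needed, Lemma~\ref{lem:elimination} offers an alternative check on the completion: there $1+a$, $1+c$ and $1+d$ are invertible, and $x,y$ are determined by $a,b,c,d$, which gives $\widehat B\cong\widehat{A_0}$ directly.
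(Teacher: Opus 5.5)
Your proposal is correct and matches the paper's proof: you realize $A_0$ as $B_P$ with $B=\Bbbk\langle a,b,c,d\rangle$, check that the right-hand sides for $x$ and $y$ reduce to $-xy$ and $0$ once $I(B)$ is killed (so they are acyclic), apply Lemma~\ref{lem:standard-acyclic} and \cite[Proposition~4.4]{IL21}, and get $H_2(A_0)=0$ from the surjection $H_2(B)\twoheadrightarrow H_2(A_0)$ with $H_2(B)=0$. The closing remark about completions via Lemma~\ref{lem:elimination} is not needed, and getting a para-equivalence from it would take more justification than ``directly,'' so you can drop it.
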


\begin{proof}
Set $F=\Bbbk\langle a,b,c,d\rangle$. In the presentation \eqref{eq:def-A0},
the right-hand sides defining $x$ and $y$ are acyclic $F$-polynomials: after
annihilating $I(F)$, they reduce to $-xy$ and $0$, both of which vanish in
$\Bbbk\langle x,y\rangle/(x,y)^2$. Hence Lemma~\ref{lem:standard-acyclic}
shows that $F\to A_0$ is $2$-acyclic. It is a para-equivalence
\mbox{by \cite[Proposition~4.4]{IL21}}. Since $H_2(F)=0$, the surjection
$H_2(F)\twoheadrightarrow H_2(A_0)$ gives $H_2(A_0)=0$.
\end{proof}

\begin{corollary}\label{cor:parafree}
One has $\ker(A_0\twoheadrightarrow A)=I(A_0)^\omega$, and
the canonical maps
\[
 \Bbbk\langle a,b,c,d\rangle\longrightarrow A_0
 \longrightarrow A
\]
are para-equivalences. Hence $A$ is parafree of rank four.
\end{corollary}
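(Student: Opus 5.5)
Write $F=\Bbbk\langle a,b,c,d\rangle$ and $K=\ker(A_0\twoheadrightarrow A)$. By Lemma~\ref{lem:change-relations}, $K$ is generated by the images of $f_\ell$ for $\ell\geq2$. The plan is to prove the two inclusions $K\subseteq I(A_0)^\omega$ and $I(A_0)^\omega\subseteq K$, and then read off the para-equivalences from Lemma~\ref{lem:a0-acyclic}.

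For $K\subseteq I(A_0)^\omega$ I would show that each $f_\ell$ lies in $I(A_0)^n$ for every $n$. Fix $n$ and work in $A_0/I(A_0)^n$. This quotient is augmented, and its augmentation ideal is nilpotent. Hence $1+a$, $1+c$, $1+d$ are invertible there, with inverses given by finite geometric series. The relations $f_0=f_1=0$ hold in this quotient, so Lemma~\ref{lem:elimination} gives \eqref{eq:elimination}, and then its last assertion gives $f_\ell=0$ for all $\ell$. Thus $f_\ell\in I(A_0)^n$ for every $n$, and so $K\subseteq I(A_0)^\omega$.

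For the reverse inclusion, let $u\in I(A_0)^\omega$ and let $\bar u$ be its image in $A$. Since $A_0\to A$ is a surjective morphism of augmented algebras, it maps $I(A_0)^n$ into $I(A)^n$, so $\bar u\in I(A)^\omega$. By Corollary~\ref{cor:monoid}, $I(A)^\omega=0$, hence $u\in K$. The two inclusions give $K=I(A_0)^\omega$.

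It remains to get the para-equivalences and parafreeness.
\begin{itemize}
\item By Lemma~\ref{lem:a0-acyclic}, the map $F\to A_0$ is a para-equivalence.
\item For $A_0\to A$: since $K\subseteq I(A_0)^n$, the induced map $A_0/I(A_0)^n\to A/I(A)^n$ is surjective. Its kernel is $(K+I(A_0)^n)/I(A_0)^n$, which is zero. The image of $I(A_0)$ under a surjection of augmented algebras is $I(A)$, so $I(A)^n$ is the image of $I(A_0)^n$. Hence $A_0/I(A_0)^n\cong A_0/(K+I(A_0)^n)\cong A/I(A)^n$, and $A_0\to A$ is a para-equivalence.
\item Para-equivalences compose, so $F\to A$ is a para-equivalence.
\item $A$ is residually nilpotent by Corollary~\ref{cor:monoid}, so $A$ is parafree of rank four.
\end{itemize}

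The only delicate point is justifying invertibility so that Lemma~\ref{lem:elimination} applies. This works because the argument is carried out modulo $I(A_0)^n$, where the augmentation ideal is nilpotent. Alternatively one can work in the completion $\widehat{A_0}$ and observe that $f_\ell$ maps to $0$ there.
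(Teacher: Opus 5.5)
Your proof is correct and follows the paper's argument. Both use Lemma~\ref{lem:change-relations}, Lemma~\ref{lem:elimination} and Corollary~\ref{cor:monoid} for the two inclusions, then the same quotient argument for $A_0\to A$; the only difference is that the paper applies Lemma~\ref{lem:elimination} in the completion $\widehat{A_0}$ rather than in each $A_0/I(A_0)^n$, which is equivalent because $\ker(A_0\to\widehat{A_0})=I(A_0)^\omega$.
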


\begin{proof}
Set $K=\ker(A_0\twoheadrightarrow A)$. By Lemma~\ref{lem:change-relations},
$K=(f_\ell\mid\ell\geq2)$. In $\widehat{A_0}$, the elements $1+a$, $1+c$,
and $1+d$ are invertible. Lemma~\ref{lem:elimination} therefore shows that
every $f_\ell$ vanishes in $\widehat{A_0}$. Hence
\[
 K\subseteq\ker(A_0\to\widehat{A_0})=I(A_0)^\omega.
\]
Conversely, the image of $I(A_0)^\omega$ in $A$ lies in $I(A)^\omega=0$ by
Corollary~\ref{cor:monoid}. Thus $K=I(A_0)^\omega$.

For every $n\geq1$, the projection $A_0\twoheadrightarrow A$ maps
$I(A_0)^n$ onto $I(A)^n$ and has kernel $K\subseteq I(A_0)^n$. It therefore
induces an isomorphism $A_0/I(A_0)^n\xrightarrow{\sim}A/I(A)^n$. This proves
the second para-equivalence; the first is Lemma~\ref{lem:a0-acyclic}.
Their composite is a para-equivalence. Residual nilpotence follows from
Corollary~\ref{cor:monoid}.
\end{proof}

\begin{remark}
By Remark~\ref{rem:free-embedding}, formula~\eqref{eq:free-embedding-map}
identifies $A$ with a subalgebra of
$F=\Bbbk\langle x_1,x_2,x_3,x_4\rangle$. The natural composite
\[
 \Bbbk\langle a,b,c,d\rangle\hookrightarrow A\hookrightarrow F
\]
is the restriction of \eqref{eq:free-embedding-map} to $a,b,c,d$. Its images
have linear parts $x_1,x_2+x_3,x_2+x_4,x_1+x_2$, which form a basis of
$H_1(F)$.
Comparing lowest nonzero homogeneous components shows that the composite is
injective, so the first arrow is also an embedding. Since the composite
induces an isomorphism on $H_1$ and both algebras have zero $H_2$, it is
$2$-acyclic and hence a para-equivalence by
\cite[Proposition~4.4]{IL21}.
\end{remark}

\section{Second homology of \texorpdfstring{$A$}{A}}\label{sec:homology}

In this section we prove that $H_2(A)$, for the algebra $A$ given by
\eqref{eq:def-A}, is countably infinite-dimensional.

We use the degree-lexicographic order on the free monoid on $a,b,c,d,x,y$ induced by
\begin{equation}\label{eq:order}
 a>b>c>d>x>y.
\end{equation}
Only the inequality $a>d$ will be used.

\begin{lemma}\label{lem:groebner}
The family $(f_\ell)_{\ell\geq0}$ given by \eqref{eq:def-f} is a reduced
Gr\"obner--Shirshov basis for the order \eqref{eq:order}. Its maximal terms are
\[
 ab^\ell c,
 \qquad \ell\geq0,
\]
and they have neither proper inclusions nor proper overlaps.
\end{lemma}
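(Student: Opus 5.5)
Expanding $f_\ell=(1+a)b^\ell(1+c)-(1+d)x^\ell(1+y)$, I first determine the maximal terms. The support of $(1+a)b^\ell(1+c)$ consists of $b^\ell$, $ab^\ell$, $b^\ell c$, $ab^\ell c$. The support of $(1+d)x^\ell(1+y)$ consists of $x^\ell$, $dx^\ell$, $x^\ell y$, $dx^\ell y$. The longest words, of length $\ell+2$, are $ab^\ell c$ and $dx^\ell y$, and they compare by their first letters. Since $a>d$, we get $\mathsf{MT}(f_\ell)=ab^\ell c$ with coefficient $1$, so each $f_\ell$ is monic. These maximal terms are pairwise distinct.

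Next I check the combinatorial conditions on $W=\{ab^\ell c\mid\ell\geq0\}$. A word in $W$ contains exactly one occurrence of $a$, namely at its start, and exactly one occurrence of $c$, namely at its end. Hence if $ab^kc$ occurs as a subword of $ab^\ell c$, the occurrence must begin at the unique $a$ and end at the unique $c$, which forces $k=\ell$; so there are no proper inclusions. For a proper overlap $uv=ab^kc$, $vw=ab^\ell c$ with $u,v,w\neq1$, the word $v$ is a proper nonempty suffix of $ab^kc$, so it does not contain $a$. But $v$ is also a nonempty prefix of $ab^\ell c$, so it begins with $a$, which is a contradiction.

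For reducedness, every non-maximal word in $\operatorname{supp}(f_\ell)$ misses either the letter $a$ or the letter $c$. Every word in $X^*WX^*$ contains both letters, so none of the other words lies in $X^*WX^*$.

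The main point is the Gröbner--Shirshov property, namely $\mathsf{MT}\bigl((f_\ell)\setminus\{0\}\bigr)=X^*WX^*$. I would invoke the Composition--Diamond lemma (Shirshov, Bokut; see \cite[Section~3]{IL21}): a monic set is a Gröbner--Shirshov basis as soon as all compositions of intersection and of inclusion are trivial. Since there are no proper overlaps and no proper inclusions among the maximal terms, there are no nontrivial compositions at all. The only inclusion compositions would be those of $f_\ell$ with itself, and these are trivially zero. This gives the claim. If one prefers to avoid the lemma, the direct route is to show that the words outside $X^*WX^*$ are linearly independent modulo $(f_\ell\mid\ell\ge0)$. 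For that I would use the monoid picture of Corollary~\ref{cor:monoid}: rewriting $Y_1Y_2^\ell Y_3\to Y_4Y_5^\ell Y_6$ is a terminating, confluent system for the congruence of Lemma~\ref{lem:monoid-kernel}, confluent precisely because there are no overlaps. The obstacle in that route is that the change of variables $a=Y_1-1$ etc. does not preserve the chosen order, so I expect the Composition--Diamond citation to be the cleaner path. In either case, the only step needing care is confirming that the absence of overlaps and inclusions really makes the composition check vacuous.
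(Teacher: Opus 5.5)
Your proposal is correct and is essentially the paper's proof: you identify $ab^\ell c$ as the monic maximal term via $a>d$, obtain reducedness because every other monomial lacks $a$ or $c$, exclude inclusions and overlaps using the unique initial $a$ and terminal $c$, and conclude with the Diamond Lemma. The paper cites Bergman's version where you cite the Composition--Diamond form, which amounts to the same thing here, and the alternative monoid-rewriting route you mention is not needed.
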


\begin{proof}
For $\ell\geq1$,
\[
 f_\ell=b^\ell-x^\ell+ab^\ell+b^\ell c-dx^\ell-x^\ell y
 +ab^\ell c-dx^\ell y.
\]
Together with \eqref{eq:first-relations}, this shows that the maximal term is
$ab^\ell c$, with coefficient $1$. No other monomial contains a word $ab^jc$:
each such word has an $a$ followed by a $c$, whereas every nonleading monomial
lacks one of these two letters in that order. Thus the family is reduced.

A word $ab^jc$ is a subword of $ab^\ell c$ only when $j=\ell$. Every
nonempty proper suffix of $ab^\ell c$ starts with $b$ or $c$, while every
prefix of $ab^jc$ starts with $a$. Hence there are no proper inclusions or
overlaps. Bergman's Diamond Lemma now gives the Gr\"obner--Shirshov assertion
\cite[Theorem~1.2]{Ber78}.
\end{proof}

\begin{proposition}\label{prop:homology}
The classes represented by $f_\ell$, $\ell\geq2$, form a basis of
$H_2(A)$.
\end{proposition}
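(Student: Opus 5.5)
The plan is to apply Lemma~\ref{lem:anick} directly to the presentation
\[
 A=\Bbbk\langle a,b,c,d,x,y\rangle/(f_\ell\mid\ell\geq0),
\]
which is legitimate by Lemma~\ref{lem:change-relations}. By Lemma~\ref{lem:groebner}, the family $(f_\ell)_{\ell\geq0}$ is a reduced Gr\"obner--Shirshov basis for the order \eqref{eq:order}. Its maximal terms $ab^\ell c$ have neither proper inclusions nor proper overlaps, so all hypotheses of Lemma~\ref{lem:anick} hold with $m=6$ generators and $\Lambda=\mathbb{Z}_{\geq0}$. Lemma~\ref{lem:anick} then identifies $H_2(A)$ with the kernel of the linearization map
\[
 \bigoplus_{\ell\geq0}\Bbbk[f_\ell]\longrightarrow\Bbbk a\oplus\Bbbk b\oplus\Bbbk c\oplus\Bbbk d\oplus\Bbbk x\oplus\Bbbk y,
 \qquad [f_\ell]\longmapsto\operatorname{lin}(f_\ell).
\]

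Next I compute the linear parts from \eqref{eq:first-relations} and \eqref{eq:def-f}:
\[
 \operatorname{lin}(f_0)=a+c-d-y,\qquad \operatorname{lin}(f_1)=b-x.
\]
For $\ell\geq2$, every monomial in $f_\ell=(1+a)b^\ell(1+c)-(1+d)x^\ell(1+y)$ has degree at least $\ell\geq2$, so $\operatorname{lin}(f_\ell)=0$.

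The vectors $a+c-d-y$ and $b-x$ are linearly independent, since they involve disjoint sets of generators. Hence a finite combination $\sum_\ell\lambda_\ell[f_\ell]$ lies in the kernel if and only if $\lambda_0=\lambda_1=0$. The kernel is therefore exactly $\bigoplus_{\ell\geq2}\Bbbk[f_\ell]$. The final clause of Lemma~\ref{lem:anick} says that, because $\operatorname{lin}(f_\ell)=0$ for $\ell\geq2$, these classes are represented by the $f_\ell$ themselves in the Hopf formula. So they form a basis of $H_2(A)$, and in particular $H_2(A)\cong\Bbbk^{\oplus\aleph_0}$.

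There is no substantial obstacle, since the combinatorial input was already settled in Lemma~\ref{lem:groebner}. The only point needing care is to check that Lemma~\ref{lem:anick} applies with an infinite index set $\Lambda$ and to track the identification of Anick chains. Degree-two chains correspond to the tips $ab^\ell c$, and there are no $3$-chains, so the resolution has length two and the degree-two differential tensored with $\Bbbk$ is the linearization. This is immediate from Anick's construction.
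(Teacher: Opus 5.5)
Your proposal is correct and takes the same approach as the paper's proof. Both apply Lemma~\ref{lem:anick} to the Gr\"obner--Shirshov basis from Lemma~\ref{lem:groebner} and compute $\operatorname{lin}(f_0)=a+c-d-y$, $\operatorname{lin}(f_1)=b-x$, and $\operatorname{lin}(f_\ell)=0$ for $\ell\geq2$; the disjoint supports of the first two then show that the kernel is spanned by the classes of $f_\ell$, $\ell\geq2$.
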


\begin{proof}
The linear parts are
\[
 \operatorname{lin}(f_0)=a+c-d-y,
 \qquad
 \operatorname{lin}(f_1)=b-x,
 \qquad
 \operatorname{lin}(f_\ell)=0,\qquad \ell\geq2.
\]
The first two have disjoint supports and are therefore linearly independent.
Lemmas~\ref{lem:anick} and~\ref{lem:groebner} give the claimed description of
$H_2(A)$.
\end{proof}

\begin{proposition}\label{prop:omega}
There is a natural isomorphism
\[
 H_2(A)\xrightarrow{\sim}
 \frac{I(A_0)^\omega}
 {I(A_0)\cdot I(A_0)^\omega+I(A_0)^\omega\cdot I(A_0)}
\]
sending the class of $f_\ell$ to its residue class for every $\ell\geq2$.
\end{proposition}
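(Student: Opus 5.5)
We use the Hopf-type description of $H_2$ for a presentation. Write $A=A_0/K$ with $K=I(A_0)^\omega$, which is allowed by Corollary~\ref{cor:parafree}. The natural tool is the five-term exact sequence in $\operatorname{Tor}$ attached to the surjection $A_0\twoheadrightarrow A=A_0/K$ of augmented algebras. It reads
\[
 H_2(A_0)\longrightarrow H_2(A)\longrightarrow
 \frac{K}{I(A_0)K+KI(A_0)}\longrightarrow H_1(A_0)\longrightarrow H_1(A)\longrightarrow0 .
\]
This is the associative analogue of the Hopf/Stallings sequence, and it is what underlies the Hopf formula cited in Lemma~\ref{lem:anick}. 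We can derive it directly from a free presentation $\Bbbk\langle a,b,c,d,x,y\rangle\to A_0$ together with the Hopf formula of \cite[Corollary~4.2]{IL21}. Concretely, present $A$ as $T/R$ with $T=\Bbbk\langle a,b,c,d,x,y\rangle$ and $R=(f_\ell\mid\ell\geq0)$, and present $A_0$ as $T/R_0$ with $R_0=(f_0,f_1)$. Then $K=R/R_0$, and $H_2(A)\cong (R\cap I(T)^2)/(I(T)R+RI(T))$.

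The proof then has three steps.

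\textbf{Step 1: the left term vanishes.} By Lemma~\ref{lem:a0-acyclic}, $H_2(A_0)=0$, so the map $H_2(A)\to K/(I(A_0)K+KI(A_0))$ is injective.

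\textbf{Step 2: the right-hand map is zero.} The map $H_1(A_0)\to H_1(A)$ is an isomorphism, because $A_0\to A$ is a para-equivalence (Corollary~\ref{cor:parafree}) and hence is an isomorphism modulo $I^2$. Equivalently, $K\subseteq I(A_0)^2$. By exactness, the map $K/(\dots)\to H_1(A_0)$ is zero, so $H_2(A)\to K/(I(A_0)K+KI(A_0))$ is surjective.

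\textbf{Step 3: identify the map on generators.} The isomorphism sends a Hopf representative in $R\cap I(T)^2$ to its image in $K=R/R_0$. Since $\operatorname{lin}(f_\ell)=0$ for $\ell\geq2$, Proposition~\ref{prop:homology} and Lemma~\ref{lem:anick} say that $f_\ell$ itself is the Hopf representative of the class of $f_\ell$. So that class goes to the residue class of $f_\ell$ in $K\subseteq A_0$, as claimed. Naturality holds because the whole construction is functorial in the pair $(A_0,K)$.

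The main obstacle is justifying the five-term sequence for augmented algebras with the correct middle term $K/(I(A_0)K+KI(A_0))$. One could derive it from a Lyndon–Hochschild–Serre-type change-of-rings spectral sequence, but that is awkward for nonnormal settings. We would instead argue by hand with the Hopf formula. Write $I=I(T)$. First, $H_2(A_0)=0$ gives $R_0\cap I^2=IR_0+R_0I$. Next, define the map
\[
 R\cap I^2\;\longrightarrow\; K/(I(A_0)K+KI(A_0)),
\]
where the target equals $R/(R_0+IR+RI)$. This map is surjective because $R\subseteq R_0+I^2$, which is the $H_1$ isomorphism. For injectivity, note that the kernel is $(R\cap I^2)\cap(R_0+IR+RI)=(R_0\cap I^2)+IR+RI$. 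Here we use $IR+RI\subseteq I^2$, and for the displayed equality we must check that an element $r_0+s$ with $r_0\in R_0$, $s\in IR+RI$, lying in $I^2$ forces $r_0\in I^2$. This holds since $s\in I^2$. Finally $(R_0\cap I^2)+IR+RI=IR_0+R_0I+IR+RI=IR+RI$, which finishes the argument cleanly.
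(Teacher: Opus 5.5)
Your argument is correct and follows the paper's route. The paper also uses the exact sequence $H_2(A_0)\to H_2(A)\to K/(I(A_0)K+KI(A_0))\to 0$, which is available because $K\subseteq I(A_0)^2$; it quotes this from Ivanov--Lopatkin (Lemma~4.1 there) and then combines it with $H_2(A_0)=0$ and the Hopf formula to match the classes of $f_\ell$. The only difference is that you prove this exact sequence by hand, by comparing the Hopf formulas for $T/R_0$ and $T/R$ over the same free algebra $T$, and that verification is sound.
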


\begin{proof}
Set $\mathfrak a=\ker(A_0\twoheadrightarrow A)$. By
Corollary~\ref{cor:parafree}, $\mathfrak a=I(A_0)^\omega$. Since
$\mathfrak a\subseteq I(A_0)^2$, \cite[Lemma~4.1]{IL21} gives an exact sequence
\[
 H_2(A_0)\longrightarrow H_2(A)\longrightarrow
 \frac{\mathfrak a}
 {I(A_0)\cdot\mathfrak a+\mathfrak a\cdot I(A_0)}\longrightarrow0.
\]
Lemma~\ref{lem:a0-acyclic} gives $H_2(A_0)=0$, hence the asserted
isomorphism.
Its compatibility with relation classes follows from the Hopf formula.
\end{proof}

\begin{lemma}\label{lem:fp-homology}
If an augmented algebra $B$ is finitely presented, then $H_2(B)$ is
finite-dimensional.
\end{lemma}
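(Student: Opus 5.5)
Since $B$ is finitely presented, write $B=\Bbbk\langle z_1,\ldots,z_m\rangle/(g_1,\ldots,g_k)$. To use a Hopf-type formula we need a presentation adapted to the augmentation. Replacing each generator $z_i$ by $z_i-\varepsilon_B(z_i)$ is an automorphism of the free algebra, so we may assume $\varepsilon_B(z_i)=0$. Then every relation $g_j$ lies in the augmentation ideal $(z_1,\ldots,z_m)=:\mathfrak f$ of $F=\Bbbk\langle z_1,\ldots,z_m\rangle$, because $g_j$ maps to $0$ under $F\to B\to\Bbbk$. Set $R=(g_1,\ldots,g_k)$, so that $B=F/R$ with $R\subseteq\mathfrak f$.

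The plan is then to apply the Hopf formula for augmented algebras \cite[Corollary~4.2]{IL21}, or equivalently the five-term exact sequence \cite[Lemma~4.1]{IL21} for $F\to B$, using $H_2(F)=0$. This gives
\[
 H_2(B)\cong\frac{R\cap\mathfrak f^2}{\mathfrak fR+R\mathfrak f}.
\]
It therefore suffices to show that $R/(\mathfrak fR+R\mathfrak f)$ is finite-dimensional. This quotient is spanned by the images of the elements $ug_jv$ with $u,v\in F$. Write $u=\varepsilon(u)+u'$ and $v=\varepsilon(v)+v'$ with $u',v'\in\mathfrak f$. Expanding $ug_jv$, every term involving $u'$ or $v'$ lies in $\mathfrak fR+R\mathfrak f$. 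Hence the quotient is spanned by the classes of $g_1,\ldots,g_k$, so its dimension is at most $k$. Since $H_2(B)$ embeds in this quotient, $\dim H_2(B)\le k<\infty$.

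A more elementary alternative avoids the Hopf formula and uses a free resolution directly. Take the start of a free resolution
\[
 \bigoplus_{j=1}^k B\to\bigoplus_{i=1}^m B\to B\to\Bbbk\to 0,
\]
whose second differential is given by Fox-type derivatives of the $g_j$. Extend it to a full free resolution; the module of $2$-chains may be taken finitely generated. Tensoring with $\Bbbk$ then shows that $H_2$ is a subquotient of $\Bbbk^k$. Either way, the one point needing care is the normalization $R\subseteq\mathfrak f$, so that the augmentation-ideal version of the Hopf formula applies. That normalization is exactly what the shift $z_i\mapsto z_i-\varepsilon_B(z_i)$ achieves, so I expect no genuine obstacle.
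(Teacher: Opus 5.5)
Your proposal is correct and is essentially the paper's proof. You shift the generators into the augmentation ideal so that $R\subseteq I(F)$, apply the Hopf formula $H_2(B)\cong (R\cap I(F)^2)/(I(F)R+RI(F))$, and use $[ug_jv]=\varepsilon(u)\varepsilon(v)[g_j]$ to see that the ambient quotient is spanned by the classes of the finitely many relators. Your alternative argument via the exact sequence $\bigoplus_j B\to\bigoplus_i B\to B\to\Bbbk\to0$ is also valid, but it is not needed.
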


\begin{proof}
After shifting a finite generating set into $I(B)$, choose a finite augmented
presentation $B=F/R$, where $F$ is a finitely generated free augmented algebra
and $R$ is generated as an ideal by $r_1,\ldots,r_s$. Since
$R\subseteq I(F)$, the Hopf formula gives
\[
 H_2(B)\cong
 \frac{R\cap I(F)^2}{I(F)R+RI(F)}.
\]
The vector space $R/(I(F)R+RI(F))$ is spanned by the classes of
$r_1,\ldots,r_s$, because
\[
 [ur_i v]=\varepsilon_F(u)\varepsilon_F(v)[r_i],
 \qquad u,v\in F.
\]
Thus its subspace
$(R\cap I(F)^2)/(I(F)R+RI(F))$ is finite-dimensional.
\end{proof}

\begin{corollary}\label{cor:not-fp}
The algebra $A$ is not finitely presented.
\end{corollary}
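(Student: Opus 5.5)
The corollary follows by contraposition from Lemma~\ref{lem:fp-homology} and Proposition~\ref{prop:homology}. Suppose, to the contrary, that $A$ is finitely presented as an augmented algebra. Lemma~\ref{lem:fp-homology} then says that $H_2(A)$ is finite-dimensional. On the other hand, Proposition~\ref{prop:homology} gives a basis of $H_2(A)$ consisting of the classes of the $f_\ell$ for $\ell\geq2$. These are indexed by an infinite set and are linearly independent, so $H_2(A)\cong\bigoplus_{\ell\geq2}\Bbbk$ is infinite-dimensional. This contradiction proves the corollary.

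One point needs checking: \emph{finitely presented} must mean the same thing in both places. Finite presentability of an algebra does not depend on the chosen finite generating set. The proof of Lemma~\ref{lem:fp-homology} already shifts generators into the augmentation ideal, replacing $z$ by $z-\varepsilon(z)$. So any finite presentation of $A$ as a unital $\Bbbk$-algebra yields a finite augmented presentation, and the lemma applies.

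No real obstacle remains. The substantive input is the Gr\"obner--Shirshov computation of Lemma~\ref{lem:groebner} together with Anick's resolution (Lemma~\ref{lem:anick}), which have already been established.
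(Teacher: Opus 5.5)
Your argument is correct and is the paper's proof: it combines Lemma~\ref{lem:fp-homology} (a finite presentation forces $H_2$ to be finite-dimensional) with Proposition~\ref{prop:homology} (the classes of $f_\ell$, $\ell\geq2$, form an infinite basis of $H_2(A)$). Your remark that finite presentability does not depend on the generating set, and that generators can be shifted into the augmentation ideal, is correct and just spells out what the lemma's proof already handles.
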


\begin{proof}
This follows from Lemma~\ref{lem:fp-homology} and
Proposition~\ref{prop:homology}.
\end{proof}

\begin{proof}[Proof of Theorem~\ref{thm:structure}]
Corollary~\ref{cor:parafree} proves the para-equivalence and the equality
$I(A_0)^\omega=\ker(A_0\twoheadrightarrow A)$.
Propositions~\ref{prop:homology} and~\ref{prop:omega} prove the remaining
assertions.
\end{proof}

\begin{proof}[Proof of Theorem~\ref{thm:main}]
Corollary~\ref{cor:parafree} proves that $A$ is parafree of rank four, and Theorem~\ref{thm:structure} computes $H_2(A)$. The defining presentation shows that $A$ is finitely generated; Corollary~\ref{cor:not-fp} shows that it is not finitely presented.
\end{proof}

\section{Transfinite length of
\texorpdfstring{$\{I(A_0)^\alpha\}$}{the augmentation series of A0}}\label{sec:transfinite}

Let $A$ and $A_0$ be the algebras given by \eqref{eq:def-A} and
\eqref{eq:def-A0}. Retain the elements $Y_1,\ldots,Y_6$ from
\eqref{eq:variables}, and write
\[
 I=I(A_0),
 \qquad
 K=I^\omega=\ker(A_0\twoheadrightarrow A),
\]
where the second equality is Corollary~\ref{cor:parafree}. Introduce variables
$z_\ell$, $\ell\geq2$, and send
\[
 z_\ell\longmapsto e_\ell=Y_1Y_2^\ell Y_3-Y_4Y_5^\ell Y_6.
\]
This gives the presentation
\begin{equation}\label{eq:z-presentation}
 A_0\cong
 \frac{\Bbbk\langle Y_1,\ldots,Y_6,z_2,z_3,\ldots\rangle}
 {(e_0,e_1,e_\ell-z_\ell\mid\ell\geq2)}.
\end{equation}
Here the augmentation is given by $Y_i\mapsto1$ and $z_\ell\mapsto0$.
Thus $K=(z_\ell\mid\ell\geq2)$.

\begin{lemma}\label{lem:normal-bases}
\leavevmode
\begin{enumerate}
\item A $\Bbbk$-basis of $A$ consists of the words
$w\in\{Y_1,\ldots,Y_6\}^*$ with no subword $Y_1Y_2^jY_3$, $j\geq0$.
\item A $\Bbbk$-basis of $A_0$ consists of the words
\[
 w_0z_{\ell_1}w_1\cdots z_{\ell_t}w_t,
 \qquad t\geq0,
\]
where $\ell_i\geq2$, and each $w_i\in\{Y_1,\ldots,Y_6\}^*$ has no subword
$Y_1Y_2^jY_3$, $j\geq0$.
\end{enumerate}
\end{lemma}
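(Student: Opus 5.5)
Both parts follow from Bergman's Diamond Lemma \cite[Theorem~1.2]{Ber78}, now applied to the presentations in the $Y$-variables rather than in $a,\dots,y$. Since the $Y_i$ are not augmentation-ideal generators, we work with $\Bbbk\langle Y_1,\ldots,Y_6\rangle$ as an ordinary free algebra; the monomials are the words in the $Y_i$ (and $z_\ell$). By Corollary~\ref{cor:monoid} and Lemma~\ref{lem:monoid-kernel}, the algebra $A$ is $\Bbbk\langle Y_1,\ldots,Y_6\rangle/(Y_1Y_2^\ell Y_3-Y_4Y_5^\ell Y_6\mid\ell\geq0)$.

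For (1), take the degree-lexicographic order with $Y_1>Y_4$. The two words $Y_1Y_2^\ell Y_3$ and $Y_4Y_5^\ell Y_6$ have the same length $\ell+2$, and they first differ in the first letter. Hence the leading word of $e_\ell$ is $Y_1Y_2^\ell Y_3$, and $e_\ell$ gives the rewriting rule $Y_1Y_2^\ell Y_3\to Y_4Y_5^\ell Y_6$. The argument of Lemma~\ref{lem:groebner} applies verbatim. A word $Y_1Y_2^jY_3$ contains $Y_1Y_2^\ell Y_3$ only if $j=\ell$. Every proper suffix starts with $Y_2$ or $Y_3$, while every prefix starts with $Y_1$, so there are no ambiguities. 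The right-hand sides $Y_4Y_5^\ell Y_6$ contain no $Y_1$, so they are already irreducible. The Diamond Lemma therefore gives the basis of irreducible words, which are exactly the words with no subword $Y_1Y_2^jY_3$. As a cross-check, one could instead verify (1) through the injective map $\theta_*$: distinct irreducible words have distinct images under $\theta$, because the proof of Lemma~\ref{lem:monoid-kernel} shows that any two preimages of a word differ only by the $Y_1\cdots Y_3\leftrightarrow Y_4\cdots Y_6$ moves. Still, the Diamond Lemma route is cleaner.

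For (2), use presentation \eqref{eq:z-presentation}. The rewriting rules for $\ell\geq2$ are $Y_1Y_2^\ell Y_3\to Y_4Y_5^\ell Y_6+z_\ell$, together with the rules $Y_1Y_2^\ell Y_3\to Y_4Y_5^\ell Y_6$ for $\ell=0,1$. For these to be compatible with a monomial order, I will choose a weighted deg-lex order. Give each $Y_i$ weight $1$ and each $z_\ell$ weight $1$, compare first by total weight, and then lexicographically with $Y_1>Y_4>\cdots>z_\ell$. Then $z_\ell$ has weight $1<\ell+2$, so the leading word of $e_\ell-z_\ell$ is still $Y_1Y_2^\ell Y_3$. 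This order is admissible: it is a well-order because there are finitely many words of each weight once we restrict to the finitely many letters appearing, and in any case Bergman's lemma only needs the descending chain condition together with compatibility with multiplication.

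The leading words are the same as in part (1). They involve no $z$'s and again have no inclusions or overlaps. The right-hand sides $Y_4Y_5^\ell Y_6+z_\ell$ are irreducible. The Diamond Lemma then yields a basis of $A_0$ consisting of the words in $Y_i$ and $z_\ell$ that have no subword $Y_1Y_2^jY_3$. Any such subword would lie entirely between consecutive $z$'s, so these are precisely the words $w_0z_{\ell_1}w_1\cdots z_{\ell_t}w_t$ described in (2).

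The only delicate point is the order in part (2): it must be a semigroup well-order on infinitely many generators in which $z_\ell$ ranks below $Y_1Y_2^\ell Y_3$. The weighted degree order handles this. If well-ordering with infinitely many $z_\ell$ causes trouble, I would instead give $z_\ell$ weight $\ell+1$. Then only finitely many words have any given weight, and $\ell+1<\ell+2$ still holds.
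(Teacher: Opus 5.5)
Your proposal is correct and is essentially the paper's proof: it uses the same rewriting rules, the same observation that the words $Y_1Y_2^\ell Y_3$ have no inclusions or overlaps, and Bergman's Diamond Lemma. The only difference is termination: the paper notes that every reduction lowers the number of occurrences of $Y_1$, which is a semigroup partial order with DCC (all Bergman needs), so no total order on the infinitely many $z_\ell$ is required. Your first order, with every $z_\ell$ of weight one, is a well-order only if the $z_\ell$ are themselves well-ordered (e.g.\ $z_2<z_3<\cdots$). Your remark about finitely many words per weight does not establish this, so keep your fallback that gives $z_\ell$ weight $\ell+1$.
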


\begin{proof}
For $A$, use the reductions
\[
 Y_1Y_2^\ell Y_3\rightsquigarrow Y_4Y_5^\ell Y_6,
 \qquad \ell\geq0.
\]
For $A_0$, use
\[
 \begin{aligned}
 Y_1Y_3&\rightsquigarrow Y_4Y_6,\\
 Y_1Y_2Y_3&\rightsquigarrow Y_4Y_5Y_6,\\
 Y_1Y_2^\ell Y_3&\rightsquigarrow Y_4Y_5^\ell Y_6+z_\ell,
 &&\ell\geq2.
 \end{aligned}
\]
Every reduction strictly decreases the number of occurrences of $Y_1$ in
each resulting monomial. The left-hand sides have neither proper inclusions
nor overlaps. The Diamond Lemma~\cite[Theorem~1.2]{Ber78} therefore gives the
stated bases.
\end{proof}

\begin{corollary}\label{cor:kernel-powers}
For every $h\geq1$, one has $K^h\neq0$ and
\[
 \bigcap_{h\geq1}K^h=0.
\]
If $V=\bigoplus_{\ell\geq2}\Bbbk z_\ell$, then there is an isomorphism of
$A$-bimodules
\begin{equation}\label{eq:kernel-quotient}
 A\otimes_\Bbbk V\otimes_\Bbbk A\otimes_\Bbbk\cdots
 \otimes_\Bbbk V\otimes_\Bbbk A
 \cong
 \frac{K^h}{K^{h+1}},
\end{equation}
where the left-hand side has $h$ factors $V$. For $a_i\in A$ and lifts
$\widetilde a_i\in A_0$, the isomorphism is
\[
 a_0\otimes z_{\ell_1}\otimes a_1\otimes\cdots\otimes
 z_{\ell_h}\otimes a_h
 \longmapsto
 [\widetilde a_0z_{\ell_1}\widetilde a_1\cdots
 z_{\ell_h}\widetilde a_h].
\]
\end{corollary}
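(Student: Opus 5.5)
We read everything off the normal-form basis of Lemma~\ref{lem:normal-bases}(2). Let $N$ denote the set of words in $\{Y_1,\ldots,Y_6\}^*$ with no subword $Y_1Y_2^jY_3$; by part (1) of that lemma, $N$ is a basis of $A$. For $h\geq0$, let $B_h$ be the span of basis words $w_0z_{\ell_1}w_1\cdots z_{\ell_t}w_t$ of $A_0$ with $t=h$ and $w_i\in N$. Then $A_0=\bigoplus_{h\geq0}B_h$ as vector spaces.

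The first task is to show that $K^h=\bigoplus_{t\geq h}B_t$. The inclusion $\supseteq$ holds because each basis word with $t\geq h$ occurrences of $z$ is a product containing $t$ factors $z_\ell\in K$. For $\subseteq$, note that $K=(z_\ell)$, so $K^h$ is spanned by products $u_0z_{\ell_1}u_1\cdots z_{\ell_h}u_h$ with $u_i\in A_0$. It therefore suffices to check that the span $S_h$ of basis words with at least $h$ occurrences of $z$ is an ideal containing all such products. The key point is that reduction never decreases the number of $z$-letters. The rewriting rules of Lemma~\ref{lem:normal-bases} replace a subword in the $Y$'s by a combination of monomials with at least as many $z$'s: $Y_4Y_5^\ell Y_6$ has none and $z_\ell$ has one. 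The $z$'s already present are never touched, since no left-hand side contains $z$. So the normal form of any monomial with $\geq h$ letters $z$ lies in $S_h$, and $S_h$ is an ideal. This gives $K^h=S_h$.

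Both structural claims follow at once. Since $B_h\neq0$ (for example $z_2^h\in B_h$), we get $K^h\supseteq B_h\neq0$. Every element has finitely many nonzero components, so it lies outside $S_h$ once $h$ exceeds its maximal $z$-count, which gives $\bigcap_h K^h=0$.

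For \eqref{eq:kernel-quotient}, $K^h/K^{h+1}\cong B_h$ as a vector space. The vector space $A\otimes V\otimes\cdots\otimes A$ has basis $w_0\otimes z_{\ell_1}\otimes\cdots\otimes w_h$ with $w_i\in N$, and this basis maps bijectively onto the basis of $B_h$. So the displayed map, with the lifts of $w_i$ taken to be the words themselves, is a linear isomorphism. The remaining step, which is also the main obstacle, is to show that the map is well defined for arbitrary lifts and is a bimodule map. Changing a lift $\widetilde a_i$ by an element of $K$ changes the product by an element of $K^{h+1}$, so the class is independent of lifts. By multilinearity the map is then determined by its values on basis tensors. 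Bimodule compatibility follows because $u\cdot[\widetilde a_0z\cdots]=[\widetilde u\widetilde a_0z\cdots]$, which is the image of $u a_0\otimes\cdots$ by the same lift-independence. The only delicate point is that the product of lifts, reduced modulo $K^{h+1}$, has its $z$-count preserved in degree $h$. Concretely, the normal form of $\widetilde a_0z\cdots\widetilde a_h$ is $\sum \mathrm{nf}_A(\widetilde a_0)z\cdots$ plus terms with more $z$'s. This holds because the rewriting rules act on each maximal $z$-free segment independently, as no left-hand side straddles a $z$; their $z$-free output is exactly the rewriting in $A$, and any $z$ they create lands in $S_{h+1}$.
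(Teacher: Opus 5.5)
Your proposal is correct and follows the paper's proof. Like the paper, you use the fact that the reductions never decrease the number of letters $z_\ell$ to identify $K^h$ with the span of normal words containing at least $h$ such letters, and then match the tensor basis bijectively with the normal words containing exactly $h$ of them, using independence of lifts modulo $K^{h+1}$. Your closing normal-form computation for arbitrary lifts is correct but unnecessary: lift-independence and multilinearity already reduce the claim to basis tensors, where the lifts can be taken to be the words themselves.
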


\begin{proof}
The reductions in the proof of Lemma~\ref{lem:normal-bases} do not decrease
the number of variables $z_\ell$. Hence $K^h$ is contained in the span of the
normal words containing at least $h$ of them. Conversely, every such normal
word belongs to $K^h$. Thus these words form a basis of $K^h$, which gives
$z_2^h\neq0$ and $\bigcap_{h\geq1}K^h=0$.

Changing a lift $\widetilde a_i$ changes the displayed product by an element
of $K^{h+1}$. With the bases from Lemma~\ref{lem:normal-bases}, the stated map
sends the tensor-product basis bijectively to the normal words containing
exactly $h$ variables $z_\ell$. Hence it is an $A$-bimodule isomorphism.
\end{proof}

\begin{lemma}\label{lem:bimodule-separation}
For every $h\geq1$,
\begin{equation}\label{eq:bimodule-separation}
 \bigcap_{k\geq0}\ \sum_{i+j=k}
 I(A)^i\frac{K^h}{K^{h+1}}I(A)^j=0.
\end{equation}
\end{lemma}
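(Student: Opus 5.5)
Via the isomorphism \eqref{eq:kernel-quotient} of Corollary~\ref{cor:kernel-powers}, I would transport the problem to the $A$-bimodule
\[
 M_h=A\otimes V\otimes A\otimes\cdots\otimes V\otimes A
\]
($h$ factors $V$), where $A$ acts on the outer factors $a_0$ and $a_h$ by left and right multiplication. The submodule in \eqref{eq:bimodule-separation} then becomes
\[
 \sum_{i+j=k}I(A)^i\,M_h\,I(A)^j
 =\sum_{i+j=k}(I(A)^ia_0\text{-part})\otimes V\otimes\cdots\otimes V\otimes(a_hI(A)^j\text{-part}).
\]
Since the middle factors are untouched, $M_h\cong A\otimes W\otimes A$ with $W=V\otimes A\otimes\cdots\otimes V$ a fixed vector space. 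The submodule is
\[
 \sum_{i+j=k}I(A)^i\otimes W\otimes I(A)^j,
\]
using that tensor products over a field commute with these images.

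So the claim reduces to a statement about vector spaces: in $A\otimes W\otimes A$,
\[
 \bigcap_k\sum_{i+j=k}I(A)^i\otimes W\otimes I(A)^j=0.
\]
I would prove this using a grading or filtration on $A$ with good complements. By Corollary~\ref{cor:monoid}, $A\cong\Bbbk[L]\subseteq\Bbbk\langle x_1,\dots,x_4\rangle$, and $I(A)^n\subseteq(x_1,\dots,x_4)^n$ (Lemma~\ref{lem:free-monoid-subalgebra}). The free algebra has the degree grading, and $(X)^n=\bigoplus_{m\ge n}$ of the degree-$m$ components. Hence in $F\otimes W\otimes F$ the sum $\sum_{i+j=k}(X)^i\otimes W\otimes(X)^j$ is contained in the span of the homogeneous tensors $F_p\otimes W\otimes F_q$ with $p+q\ge k$.

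Any fixed element of $A\otimes W\otimes A$ is a finite sum of such tensors after expanding in degrees, so it has bounded total degree $p+q$. Therefore it lies in the intersection only if it is $0$. Injectivity of $A\otimes W\otimes A\to F\otimes W\otimes F$ follows because everything is flat over the field $\Bbbk$.

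The main obstacle is the first step: verifying carefully that the bimodule structure on $K^h/K^{h+1}$, together with the powers $I(A)^i$ acting on it, corresponds exactly to multiplication on the outer tensor factors. One must also check that $I(A)^iK^h+K^{h+1}$ maps correctly. This follows from the explicit formula in Corollary~\ref{cor:kernel-powers} with lifts. After that, the degree argument in the free algebra is routine.
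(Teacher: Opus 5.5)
Your proposal is correct and follows the same route as the paper. You transport the problem via \eqref{eq:kernel-quotient} to $A\otimes W\otimes A$ and embed the two outer factors into $F=\Bbbk\langle x_1,\dots,x_4\rangle$, where flatness over $\Bbbk$ gives the injectivity that the paper's $\hookrightarrow$ takes for granted. You then note that the $k$-th sum lies in the span of tensors whose two outer degrees add up to at least $k$, whereas any single element has bounded outer degree, so the intersection is zero.
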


\begin{proof}
By Remark~\ref{rem:free-embedding}, regard $A$ as a subalgebra of
$F=\Bbbk\langle x_1,x_2,x_3,x_4\rangle$. Grouping the two outer factors in
\eqref{eq:kernel-quotient} gives
\[
 \begin{aligned}
 \frac{K^h}{K^{h+1}}
 &\overset{\eqref{eq:kernel-quotient}}{\cong}
 A\otimes_\Bbbk
 \bigl(V\otimes_\Bbbk A\otimes_\Bbbk\cdots\otimes_\Bbbk
 A\otimes_\Bbbk V\bigr)\otimes_\Bbbk A\\
 &\hookrightarrow
 F\otimes_\Bbbk
 \bigl(V\otimes_\Bbbk A\otimes_\Bbbk\cdots\otimes_\Bbbk
 A\otimes_\Bbbk V\bigr)\otimes_\Bbbk F,
 \end{aligned}
\]
where the last arrow is induced by $A\hookrightarrow F$ on the two outer
factors.
Since $I(F)=(x_1,x_2,x_3,x_4)$, under this embedding the
summand indexed by $i+j=k$ in \eqref{eq:bimodule-separation} has left outer
factor in $I(F)^i$ and right outer factor in $I(F)^j$. In the $x$-word
basis, the sum over $i+j=k$ is spanned by tensors for which the sum of the two
outer word lengths
is at least $k$. Since every element is a finite sum of basis tensors, the
intersection is zero. This proves \eqref{eq:bimodule-separation}.
\end{proof}

\begin{lemma}\label{lem:transfinite-terms}
For every $h\geq1$, one has $I^{\omega\cdot h}=K^h$.
\end{lemma}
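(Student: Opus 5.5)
Fix $h\geq1$. We argue by induction on $h$, proving $I^{\omega\cdot h}=K^h$; the case $h=1$ is Corollary~\ref{cor:parafree}. Assume $I^{\omega\cdot h}=K^h$. The task is to compute $I^{\omega\cdot h+k}$ for finite $k$ and then intersect. The natural guess is
\[
 I^{\omega\cdot h+k}=K^{h+1}+\sum_{i+j=k}I^iK^hI^j,
 \qquad k\geq0.
\]
We prove it by induction on $k$ from the recursion $I^{\alpha+1}=I^\alpha I+II^\alpha$.

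\textbf{Step 1: the finite steps.} The recursion gives $I^{\omega h+k+1}=\sum_{i+j=k+1}I^iK^hI^j+K^{h+1}I+IK^{h+1}$, and $K^{h+1}I+IK^{h+1}\subseteq K^{h+1}$. So the inclusion ``$\subseteq$'' is immediate, and for ``$\supseteq$'' it suffices to show $K^{h+1}\subseteq I^{\omega h+k}$ for every $k$. Since $K=I^\omega\subseteq I^{k'}$ for all $k'$, we have
\[
 K^{h+1}=K^h\cdot K\subseteq K^h I^{k}\subseteq I^{\omega h+k},
\]
where the last inclusion uses $K^hI^k\subseteq I^{\omega h+k}$. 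That inclusion follows by induction on $k$ from the recursion.

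\textbf{Step 2: the limit.} Next we intersect over $k$ and want
\[
 \bigcap_{k}\Bigl(K^{h+1}+\sum_{i+j=k}I^iK^hI^j\Bigr)=K^{h+1}.
\]
Pass to $K^h/K^{h+1}$. The image of $\sum_{i+j=k}I^iK^hI^j$ there is $\sum_{i+j=k}I(A)^i\,(K^h/K^{h+1})\,I(A)^j$, because $K$ acts trivially on $K^h/K^{h+1}$ modulo $K^{h+1}$ and $I$ maps onto $I(A)$. Lemma~\ref{lem:bimodule-separation} says this intersection is zero. Hence the intersection above equals $K^{h+1}$ once we know it is contained in $K^h$, which holds because each term is contained in $K^h$. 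Therefore $I^{\omega(h+1)}=K^{h+1}$.

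\textbf{Main obstacle.} The delicate point is Step~2, specifically the claim that the image of $I^iK^hI^j$ in $K^h/K^{h+1}$ is exactly $I(A)^i(K^h/K^{h+1})I(A)^j$. This requires the $A$-bimodule structure from Corollary~\ref{cor:kernel-powers} to be induced from $A_0$ via $A_0\twoheadrightarrow A$, which it is. A second subtlety is that intersecting a decreasing family commutes with passing to the quotient only because all terms contain $K^{h+1}$, and this is built into the guessed formula. Finally, because $h$ is finite, $\omega\cdot h$ is reached by these successive limits and no further limit ordinals intervene. Corollary~\ref{cor:kernel-powers} ($\bigcap K^h=0$, $K^h\neq0$) will then give the transfinite length in Theorem~\ref{thm:transfinite-length}.
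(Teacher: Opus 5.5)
Your proof is correct and follows essentially the same approach as the paper. You compute the finite stages $I^{\omega\cdot h+k}$ by induction on $k$, pass to $K^h/K^{h+1}$ viewed as an $A$-bimodule, and invoke Lemma~\ref{lem:bimodule-separation} together with cofinality. The extra summand $K^{h+1}$ in your formula is redundant but harmless, since $K^{h+1}\subseteq K^hI^k$ already lies in $\sum_{i+j=k}I^iK^hI^j$; the paper uses exactly this observation.
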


\begin{proof}
The case $h=1$ is the definition of $K$. Suppose that
$I^{\omega\cdot h}=K^h$. Induction on $k$ gives
\[
 I^{\omega\cdot h+k}=\sum_{i+j=k}I^iK^hI^j,
 \qquad k\geq0.
\]
Since $K\subseteq I^k$, every displayed sum contains
$K^{h+1}\subseteq K^hI^k$. The action of $I$ on $K^h/K^{h+1}$ factors
through $I(A)$, and $I\to I(A)$ is surjective. Hence, for every $k\geq0$,
the quotient map $K^h\twoheadrightarrow K^h/K^{h+1}$ sends
$\sum_{i+j=k}I^iK^hI^j$ onto
\[
 \sum_{i+j=k}I(A)^i\frac{K^h}{K^{h+1}}I(A)^j.
\]
By Lemma~\ref{lem:bimodule-separation}, the intersection of these images is
zero. Since $K^{h+1}$ is contained in every displayed sum, an element
belonging to all these sums has, modulo $K^{h+1}$, an image in this zero
intersection. Hence it lies in $K^{h+1}$, and therefore
\[
 \bigcap_{k\geq0}\ \sum_{i+j=k}I^iK^hI^j=K^{h+1}.
\]
The ordinals $\omega\cdot h+k$, $k<\omega$, are cofinal in
$\omega\cdot(h+1)$, so $I^{\omega\cdot(h+1)}=K^{h+1}$. Induction on $h$
proves the assertion.
\end{proof}

\begin{proof}[Proof of Theorem~\ref{thm:transfinite-length}]
The ordinals $\omega\cdot h$, $h\geq1$, are cofinal in $\omega^2$.
Lemma~\ref{lem:transfinite-terms} and Corollary~\ref{cor:kernel-powers} give
\[
 I^{\omega^2}=\bigcap_{h\geq1}I^{\omega\cdot h}
 =\bigcap_{h\geq1}K^h=0.
\]

Every ordinal below $\omega^2$ is $\omega\cdot h+k$ for some $h,k<\omega$.
At a finite stage, $a^k\in I^k$ has image $x_1^k\neq0$ in $F$ by
\eqref{eq:free-embedding-map}. If $h\geq1$, then
\[
 I^{\omega\cdot h+k}\supseteq I^{\omega\cdot(h+1)}=K^{h+1}\neq0
\]
by Lemma~\ref{lem:transfinite-terms} and
Corollary~\ref{cor:kernel-powers}. Equality at
any stage makes the series stationary thereafter. Thus no stage below
$\omega^2$ is stationary, whereas
$I^{\omega^2}=I^{\omega^2+1}=0$.
\end{proof}

\end{document}